% Upper-case    A B C D E F G H I J K L M N O P Q R S T U V W
% Lower-case    a b c d e f g h i j k l m n o p q r s t u v
% Digits        0 1 2 3 4 5 6 7 8 9 % Exclamation   !
% Double quote "          Hash (number) # % Dollar        $
% Percent      %          Ampersand     & % Acute accent  '
% Left paren   (          Right paren   ) % Asterisk      *
% Plus         +          Comma         , % Minus         -
% Point        .          Solidus       / % Colon         :
% Semicolon    ;          Less than     < % Equals        =
% Greater than >          Question mark ?  % At            @
% Left bracket [          Backslash     \ % Right bracket ]
% Circumflex   ^          Underscore    _ % Grave accent  `
% Left brace   {          Vertical bar  | % Right brace   }
% Tilde        ~
%%%%%%%%%%%%%%%%%%%%%%%%%%%%%%%%%%%%%%%%%%%%%%%%%%%%%%%%%%%%%%%%%%%
%
% Format LaTeX2e
%
%%%%%%%%%%%%%%%%%%%%%%%%%%%%%%%%%%%%%%%%%%%%%%%%%%%%%%%%%%%%%%%%%%%

\documentclass[12pt,a4paper]{amsart}
\usepackage{mathrsfs}
\usepackage{amsfonts}
\usepackage[active]{srcltx}
\usepackage[notref,notcite]{showkeys}
\usepackage{enumerate}

\usepackage{amsmath,amssymb,xspace,amsthm}
\newtheorem{theorem}{Theorem}%[section]
%[section]
%[section]
%[section]
\newtheorem{proposition}[theorem]{Proposition}%[section]
\newtheorem{corollary}[theorem]{Corollary}%[section]
%[section]
%[section]
\numberwithin{equation}{section}

\def\span{\operatorname{span}}

\newcommand{\C}{\ensuremath{\mathbb C}\xspace}

\renewcommand{\i}{\ensuremath{\mathfrak i}}

\newcommand{\Z}{\ensuremath{\mathbb{Z}}\xspace}
\newcommand{\N}{\ensuremath{\mathbb{N}}\xspace}

\newcommand{\V}{\ensuremath{\mathfrak{V}}\xspace}

\renewcommand{\phi}{\varphi}

\renewcommand{\leq}{\leqslant}
\renewcommand{\geq}{\geqslant}

\begin{document}
\title{Irreducible Virasoro modules from tensor products}
\author{Haijun Tan and  Kaiming Zhao}
%\date{}
\maketitle

\begin{abstract}
   In this paper, we obtain a class of irreducible Virasoro modules
    by taking tensor products of the irreducible Virasoro modules $\Omega(\lambda,b)$ defined in \cite{LZ},
    with irreducible highest weight modules
   $V(\theta ,h)$ or with irreducible Virasoro modules Ind$_{\theta}(N)$ defined in \cite{MZ2}.
    %These resulting modules are non-weight modules with nonzero central charge, in general.
    We
    determine the necessary and sufficient conditions for two such irreducible tensor products to be isomorphic.
    Then we prove that the tensor product of   $\Omega(\lambda,b)$ with
    a classical Whittaker module is isomorphic to
 the module $\mathrm{Ind}_{\theta,
\lambda}(\mathbb{C_\mathbf{m}})$ defined in \cite{MW}. As a
by-product we obtain  the necessary and sufficient conditions for
the module $\mathrm{Ind}_{\theta,
     \lambda}(\mathbb{C_\mathbf{m}})$ to be irreducible. We also generalize the module $\mathrm{Ind}_{\theta,
\lambda}(\mathbb{C_\mathbf{m}})$ to
$\mathrm{Ind}_{\theta,\lambda}(\mathcal{B}^{(n)}_{\mathbf{s}})$ for
any non-negative integer $ n$ and use the above results to
completely determine when the modules
$\mathrm{Ind}_{\theta,\lambda}(\mathcal{B}^{(n)}_{\mathbf{s}})$ are
irreducible. The submodules of
$\mathrm{Ind}_{\theta,\lambda}(\mathcal{B}^{(n)}_{\mathbf{s}})$ are
studied and an open problem in \cite{GLZ} is solved.  Feigin-Fuchs'
Theorem on singular vectors of Verma modules over the Virasoro
algebra is crucial to our proofs in this paper.
\end{abstract}

\vskip 10pt \noindent {\em Keywords:}  Virasoro algebra, non-weight
module, irreducible module.

\vskip 5pt \noindent {\em 2000  Math. Subj. Class.:} 17B10, 17B20,
17B65, 17B66, 17B68

\vskip 10pt

\section{Introduction}

\vskip 5pt We denote by $\mathbb{Z}$, $\mathbb{Z}_+$, $\mathbb{N},$
$\mathbb{R}$ and $\mathbb{C}$
   the sets of  all integers, non-negative integers, positive integers, real numbers and complex numbers,
   respectively.

\vskip 5pt The \textbf{Virasoro algebra} $\V$ := Vir[$\mathbb{Z}$]
(over
   $\mathbb{C}$) is the Lie algebra with the basis $\{c,d_i|i\in \mathbb{Z}\}$
   and the Lie brackets defined by
   $$[c,d_i]=0,\,\,\,\,[d_i,d_j]=(j-i)d_{i+j}+\delta_{i,-j}\frac{i^3-i}{12}c,
    \forall i,j \in \mathbb{Z}.$$
   The algebra  $\V$ is one of the most important Lie algebras both in mathematics
   and in mathematical physics, see for example \cite{KR, IK} and references therein.
   The Virasoro algebra theory has been widely used in many physics areas and
   other mathematical brances, for example, quantum physics \cite{GO}, conformal field
   theory \cite{FMS}, higher-dimensional WZW models \cite{IKUX, IKU}, Kac-Moody algebras
   \cite{K2, MoP}, vertex algebras \cite{LL}, and so on.

\vskip 5pt   The theory of weight Virasoro modules with
finite-dimensional weight spaces is
   fairly well developed. In particular, a classification of weight Virasoro modules
   with finite-dimensional weight spaces was given by Mathieu \cite{M}, and a classification
   of weight Virasoro modules with at least one finite dimensional nonzero weight space was given in
   \cite{MZ1}. There are some known irreducible weight Virasoro modules with infinite dimensional
   weight spaces, see \cite{Zh,CM,CGZ,LLZ}. We remark that the tensor
   product of intermediate
   series modules over the Virasoro algebra is never  irreducible
   \cite{Zk}.
   For non-weight irreducible Virasoro modules, there are
   Whittaker modules, see \cite{OW} and \cite{BM}, and other non-Whittaker modules, see \cite{MW,MZ2,LZ,LLZ}.

\vskip 5pt The purpose of the present paper is to construct new
irreducible (non-weight) Virasoro modules by taking tensor product
of some known irreducible Virasoro modules recently defined in
\cite{LZ} and \cite{MZ2}.    Let us first recall some notions and
   results which will be used later.

\vskip 5pt
     For any pair $(\lambda, b)\in \mathbb{C}^\ast \times
     \mathbb{C}$, the Virasoro module $\Omega(\lambda,
     b)$ is defined on the polynomial (associative) algebra
      $\mathbb{C}[\partial] $  in one indeterminant
   $\partial$ over $\mathbb{C}$ with the action of $\V $   given by
     $$d_n\partial ^j = \lambda^n(\partial + n(b-1))(\partial-n)^j,\,\,\,\, c\partial ^j=0,
      \forall j\in \mathbb{Z}_+, n\in\mathbb{Z}.$$
     It was proved in \cite{LZ} that $\Omega(\lambda,  b)$ is
     irreducible if and only if $b\ne 1$; if $b=1$ then $\Omega(\lambda,
     1)$ has a codimension one irreducible submodule isomorphic to $\Omega(\lambda,
     0)$.

\vskip 5pt Let $U:=U(\V)$ be the universal enveloping algebra of the
Virasoro algebra $\V$. For any $\theta, h\in \C$, let $I(\theta,h)$
be the left ideal of $U$ generated by the set $$
\bigl\{d_{i}\bigm|i>0\bigr\}\bigcup\bigl\{d_0-h\cdot 1, c-\theta
\cdot 1\bigr\}. $$ The Verma module with highest weight $(\theta,
h)$ for $\V $ is defined as the quotient  $\bar V(\theta,
h):=U/I(\theta,h)$. It is a highest weight module of $\V $ and has a
basis consisting of all vectors of the form $$
d^{k_{-1}}_{-1}d^{k_{-2}}_{-2}\cdots d^{k_{-n}}_{-n}v_{h};\quad
k_{-1},k_{-2},\cdots, k_{-n} \in{\Z_+}, n\in \N,$$ where
$v_h=1+I(\theta ,h)$. Each nonzero scalar multiple of $v_h$ is
called a highest weight vector of the Verma module. Then we have the
{\it irreducible highest weight module} $ V(\theta,h)=\bar V(\theta,
h)/J,$ where $J$ is the maximal proper submodule of $\bar V(\theta,
h)$. For the structure of $ V(\theta, h)$, refer to \cite{FF}
 (refined versions are in \cite{A, D}).

\vskip 5pt Denote by $\V _+$ the Lie subalgebra of $\V$ spanned by
all $d_i$ with
     $i\geq 0$. For $n\in \mathbb{Z}_+$, denote by $\V _+^{(n)}$ the Lie subalgebra
     of $\V $ generated by all $d_i$ for $ i>n$.  For any $\V _+$ module $N$ and
     $\theta \in \mathbb{C}$, consider the induced module
     $\mathrm{Ind}(N):=U(\V)\otimes_{U(\V _+)}N$,
     and denote by $\mathrm{Ind}_{\theta}(N)$ the module
     $\mathrm{Ind}(N)/(c-\theta)\mathrm{Ind}(N)$.
     From [MZ2] we know that for an irreducible $\V _+$ module $N$, if there
     exists $k\in \mathbb{N} $ such that $d_k$ acts injectively on $N$ and
     $d_iN=0$ for all $i>k$, then
     $\mathrm{Ind}_{\theta}(N)$ is an irreducible $\V$ module
     for any $\theta \in \mathbb{C}$.

\vskip 5pt The present paper is organized as follows. In Section 2,
we obtain a
    class of irreducible non-weight modules by taking the tensor product of
    $\Omega(\lambda, b)$ with the highest weight module $V(\theta ,h)$ or with
    the modules $\mathrm{Ind}_{\theta}(N)$ (see Theorem 1). In Section 3, we determine the necessary and
    sufficient conditions  for two irreducible modules $\Omega(\lambda, b)\otimes V$ and
    $\Omega(\lambda', b')\otimes V'$ to be isomorphic (Theorem 2).
    %{, where $V$ and $V'$ are locally nilpotent
    %$\mathrm{Vir}_+^{(n)}$ modules for some positive integer $n$ }.
    In Section 4, we compare the tensor product
    modules $\Omega(\lambda,b)\otimes V$ with all other known non-weight irreducible modules in \cite{LZ,LLZ,MZ2,MW}.
    In particular, we prove that the
    tensor product of $\Omega(\lambda,b)$ with the classical Whittaker module (see \cite{OW} and \cite{LGZ})
    is isomorphic to
    the module $\mathrm{Ind}_{\theta,\lambda}(\mathbb{C}_{\mathbf{m}})$ defined in \cite{MW}.
    As a by-product, we obtain the necessary and sufficient conditions for the modules
    $\mathrm{Ind}_{\theta,\lambda}(\mathbb{C}_{\mathbf{m}})$ to be irreducible (theorem 5). From these we conclude
    that the modules $\Omega(\lambda,b)\otimes V$ are new when $V$ are not the classical irreducible
     Whittaker modules (Proposition 6). In section 5, we generalize the modules
     $\mathrm{Ind}_{\theta,\lambda}(\mathbb{C}_{\mathbf{m}})$ which were defined and studied in \cite{MW} to the modules
     $\mathrm{Ind}_{\theta,\lambda}(\mathcal{B}^{(n)}_{\mathbf{s}})$
     for any
 $n\in\Z_+$. More precisely, for $n\in\Z_+, \lambda\in
\C^*$, first we define   the subalgebra of $\V$ as follows
$$\mathfrak{b}_{\lambda,n}=\mathrm{span}_{\C}\{d_k-\lambda^{k-n+1}d_{n-1}: k\geq n \}.$$  For any $\theta\in \C$ and
 ${\bf{s}}=(s_n, s_{n+1},\cdots, s_{2n})\in \C^{n+1}$, we define the
 $1$-dimensional $\mathfrak{b}_{\lambda,n}$-module $\mathcal{B}^{(n)}_{\mathbf{s}}$
    on $\C$
     over
     $\mathfrak{b}_{\lambda,n}$  by
     $$(d_k-\lambda^{k-n+1}d_{n-1})\cdot 1=s_k, \ n\leq k\leq 2n.$$
Then we have our
       Virasoro module
    $$ \mathrm{Ind}_{\theta, \lambda}(\mathcal {B}^{(n)} _{{\bf {s}}}):=
    \left(\mathrm{Ind}_{\mathfrak{b}_{\lambda,n}}^{\V}\mathcal{B}^{(n)}_{\mathbf{s}}\right )
    /(c-\theta)\left(\mathrm{Ind}_{\mathfrak{b}_{\lambda,n}}^{\V}\mathcal{B}^{(n)}_{\mathbf{s}}\right ).$$
We  use the above established results to  obtain the necessary and
     sufficient conditions for the modules $\mathrm{Ind}_{\theta,\lambda}(\mathcal{B}^{(n)}_{\mathbf{s}})$
     to be irreducible in Theorems 7, 8 and
     9  for different cases of  $n$.  We remark that the three cases are
totally different. We also study the submodules of
$\mathrm{Ind}_{\theta,\lambda}(\mathcal{B}^{(n)}_ {\mathbf{s}})$ in
Theorem 10. As a by-product, Corollary 11 solves the
     open problem in \cite{GLZ}, i.e., $\mathrm{Ind}_{\theta, \lambda}(\mathcal {B}^{(0)} _{{\bf{s}}})$ has
a unique maximal submodule. Our main technique used in this paper is
Feigin-Fuchs' Thereom in \cite{FF} on singular vectors of Verma
modules over the Virasoro algebra.

In the subsequent paper \cite{TZ} we generalize all the above
results. In particular, we determine necessary and sufficient
conditions for the tensor product of finitely many modules of the
form $\Omega(\lambda,b)$ to be simple.

\section{ Constructing Non-weight Modules}
\vskip 5pt In this section we will obtain a class of irreducible non-weight modules over $\V$ by
  taking the tensor products of $\Omega(\lambda,b)$ with two classes of other modules, which is the following

\begin{theorem} Let $\lambda\in \C^*$ and $b\in  \C\setminus\{1\}$.
Let $V$ be an irreducible module over $\V$ such that each $d_k$ is
locally finite on $V$ for all $k\ge R$ where $R$ is  a fixed
positive integer. Then $\Omega(\lambda,b)\otimes V$ is an
irreducible Virasoro module.
\end{theorem}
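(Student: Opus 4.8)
The plan is to show that any nonzero $\V$-submodule $W$ of $\Omega(\lambda,b)\otimes V$ must be the whole space. The key structural feature is that $\Omega(\lambda,b)$ is built on the polynomial algebra $\C[\partial]$, and the action of $d_n$ on $\partial^j$ produces $\lambda^n(\partial+n(b-1))(\partial-n)^j$; the factor $\lambda^n$ means that the ``degree'' in $\partial$ is a useful grading-like device, while the translation $(\partial - n)^j$ lets us manipulate the $\C[\partial]$-factor rather freely. So the first step is to understand how a cleverly chosen combination of the $d_n$'s acts on a tensor $\partial^j \otimes v$, isolating the top-degree behaviour in $\partial$.

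\medskip

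First I would fix a nonzero element $w \in W$ and write it as $w = \sum_{j=0}^{m} \partial^j \otimes v_j$ with $v_m \neq 0$, where $m$ is minimal among all nonzero elements of $W$ (the ``$\partial$-degree''). The goal of the first main step is to reduce $m$ to $0$, i.e.\ to produce a nonzero element of the form $1\otimes v$ in $W$. To do this I would apply operators of the form $d_n$ (and products/differences thereof) and examine the leading term in $\partial$. Because $d_n(\partial^j\otimes v) = \lambda^n(\partial+n(b-1))(\partial-n)^j\otimes v + \partial^j\otimes d_n v$ (by the coproduct), and because $d_k$ is locally finite on $V$ for $k\ge R$, I can choose $n$ large so that $d_n$ acts on the relevant finitely many $v_j$ within a finite-dimensional invariant subspace. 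Forming suitable polynomial combinations $p(d_n)$ or differences like $d_n - (\text{scalar})$ should kill the $\partial^m$ coefficient's contribution from $V$ while preserving a nonzero term; iterating lowers $m$. The condition $b\ne 1$ is exactly what guarantees the factor $(\partial+n(b-1))$ does not collapse the construction, matching the irreducibility criterion for $\Omega(\lambda,b)$ recalled above.

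\medskip

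Once a nonzero vector $1\otimes v \in W$ is obtained, the second step is to show $1\otimes V \subseteq W$ and then $\C[\partial]\otimes V \subseteq W$. For the former, I would use the irreducibility of $V$: acting by $\V$ on $1\otimes v$ and projecting onto the $\partial^0$-component, the action on the $V$-factor is (up to the $\Omega$-contribution, which raises $\partial$-degree) essentially the $\V$-action on $V$, so by irreducibility of $V$ I can reach all of $1\otimes V$ modulo higher $\partial$-degree terms; a degree/triangularity argument then cleans this up to give $1\otimes V\subseteq W$. For the latter, starting from $1\otimes v$ and applying $d_n$ repeatedly produces terms $\lambda^n(\partial+n(b-1))\otimes v + \cdots$, and by varying $n$ and taking combinations one generates $\partial\otimes v$, then $\partial^2\otimes v$, and so on—this is the standard mechanism showing $\C[\partial]\otimes v$ lies in $W$ once $1\otimes v$ does. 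Combining both gives $W = \C[\partial]\otimes V$, proving irreducibility.

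\medskip

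I expect the main obstacle to be the first step, the degree-reduction: controlling the interaction between the $\Omega$-factor (which shifts $\partial$-degree up by the multiplication by $(\partial+n(b-1))$ and reshapes lower terms via $(\partial-n)^j$) and the $V$-factor (where only local finiteness, not a grading, is available). The local finiteness hypothesis for $k\ge R$ is what makes the $V$-side tractable: on the finite-dimensional $d_n$-invariant subspace containing the relevant $v_j$, I can use the minimal polynomial of $d_n$ to annihilate unwanted contributions. Care is needed to ensure that the polynomial in $d_n$ chosen to simplify the $V$-factor does not simultaneously annihilate the $\Omega$-factor's leading coefficient; here the explicit eigenvalue-type structure $\lambda^n(\partial + n(b-1))$ and the freedom to choose $n$ large (so that distinct $\partial$-powers give distinct scalar factors) should separate the two effects. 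Feigin--Fuchs' theorem, advertised as the central tool of the paper, is likely invoked here or in a supporting lemma to control singular-vector phenomena when $V$ is a highest weight module.
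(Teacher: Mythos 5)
Your outline (reduce the $\partial$-degree of a nonzero element of a submodule to $0$, then propagate) matches the paper's, but the step you yourself flag as the main obstacle --- the degree reduction --- is exactly where your argument has a genuine gap, and the tool you are missing is not Feigin--Fuchs but the classification theorem of Mazorchuk--Zhao (Theorem 2 of \cite{MZ2}). The paper first invokes that theorem to conclude that $V$ must be an irreducible highest weight module $V(\theta,h)$ or a module $\mathrm{Ind}_{\theta}(N)$, and in either case every vector $v\in V$ is actually \emph{annihilated} by $d_l$ for all $l\ge K(v)$, not merely contained in a finite-dimensional $d_l$-invariant subspace. This is the decisive upgrade: for $l\ge K:=\max_j K(v_j)$ the operator $d_l$ acts on $w=\sum_{j=0}^{s}\partial^j\otimes v_j$ purely through the $\Omega$-factor, so $\lambda^{-l}d_lw=\sum_{j}(\partial+l(b-1))(\partial-l)^j\otimes v_j$ is a polynomial in $l$ of degree $s+1$ whose coefficients lie in the tensor product and are independent of $l$; evaluating at $s+2$ values of $l$ and inverting a Vandermonde matrix places each coefficient in the submodule, and the top coefficient is $\pm(b-1)(1\otimes v_s)$, a nonzero element of $\partial$-degree $0$ (this is where $b\ne 1$ enters). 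Minimality of $s$ then forces $s=0$.

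Your proposed substitute --- applying a polynomial $p(d_n)$, e.g.\ the minimal polynomial of $d_n$ on a finite-dimensional invariant subspace of $V$, to kill the $V$-contribution --- does not work as stated: on the tensor product $d_n$ acts as $d_n\otimes 1+1\otimes d_n$, so $p(d_n)$ does not factor through the two tensor legs, and $d_n\otimes 1$ is \emph{not} locally finite on $\Omega(\lambda,b)$ (it raises the $\partial$-degree by one), so there is no finite-dimensional space on which spectral projections could separate the two summands. Without the annihilation property you cannot arrange for $d_l$ to act on one factor only. The second half of your sketch has the analogous problem: the terms you suppress as ``$+\cdots$'' are $1\otimes d_nv$, and they vanish only because $d_nv=0$ for large $n$. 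It is also organized in the wrong order: there is no grading on $\Omega(\lambda,b)\otimes V$ that lets you ``project onto the $\partial^0$-component,'' so one should first establish $\C[\partial]\otimes v\subset M$ for the single vector $v$ produced in step one, and only then observe that $X=\{u\in V:\C[\partial]\otimes u\subset M\}$ is a nonzero submodule of $V$, whence $X=V$ by irreducibility. Finally, Feigin--Fuchs plays no role in this theorem; it is used only in the later sections on the modules $\mathrm{Ind}_{\theta,\lambda}(\mathcal{B}^{(n)}_{\mathbf{s}})$.
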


\begin{proof} From Theorem 2 in \cite{MZ2} we know that $V$ has to be
$V(\theta, h)$ for some $\theta, h\in\C$ or
$\mathrm{Ind}_{\theta}(N)$ defined in \cite{MZ2}. Let
$W=\Omega(\lambda,b)\otimes V$.
    It is clear that,      for any $v\in V$, there is  a  positive
    integer     $K(v)$ such that $d_l(v)=0$ for all $l\geq K(v)$.
\vskip 5pt Suppose $M$ is a nonzero submodule of $W$. It suffices to
    show that  $M=W$. Take a nonzero  $w=\sum_{j=0}^{s}\partial^j\otimes
    v_j\in W$ such that
      $v_j\in V$, $v_s\neq 0$ and $s$ is minimal.

\vskip 5pt {\bf Claim 1.} $s=0$.

\vskip 5pt Let $K=\max\{K(v_j):j=0,1,\cdots, s\}$.
Using $d_l(v_j)=0$ for all $l\geq K$
      and $j=0,1,\cdots,s$, we deduce that
$$\lambda^{-l}d_lw=\sum_{j=0}^{s}(\partial + l(b-1))(\partial-l)^j\otimes
    v_j\in M, \,\,\forall l\ge K.$$
    Write the right hand side as
    \begin{equation}\sum_{j=0}^{s+1}l^jw_i\in M, \,\,\forall l\ge K,\end{equation}
    where $w_i\in W$ are independent of $l$. In particular,  $w_{s+1}= (b-1)(-1)^{s-1}\otimes
    v_s$. Taking $l=K, K+1, ..., K+s$, we see that the coefficient
    matrix of $w_i$ is a Vandermonde matrix.
So each $w_i\in M$. In particular, $w_{s+1}= (b-1)(-1)^{s-1}\otimes
v_s\in M$. Consequently $s=0$.

\vskip 5pt {\bf Claim 2.} $M=W$.

 \vskip 5pt  From Claim 1 we know that  $1\otimes v\in M$
 for some nonzero $v\in V$.
By induction on $t$ and using
   $$\aligned d_l(\partial^{t}\otimes v)&=
      (\lambda^l(\partial + l(b-1))(\partial-l)^t)\otimes v\\
      &=\lambda^l(\partial-l)^{t+1}\otimes  v+
      lb\lambda^l(\partial-l)^{t}\otimes  v,\endaligned$$
   where $l\geq K(v), t\in \mathbb{Z}_+$, we  deduce that $\partial^t \otimes v \in M$ for all $t\in
   \mathbb{Z}_+$, i.e.,
   $\Omega(\lambda,b)\otimes v\subset M$. Let $X$ be a maximal
   subspace of $V$ such that $\Omega(\lambda,b)\otimes X\subset M$. We
   know that $X\ne0$. Clearly, $X$ is a nonzero submodule of $V$.
   Since $V$ is irreducible, we obtain that $X=V$. Therefore, $M=W$ and $W$ is irreducible.
\end{proof}

{\bf Example 1.} Let $\lambda_1, \lambda_2,\theta\in \mathbb{C}$ and
let $J$ be the left ideal of $U(\V _+)$
    generated by $d_1-\lambda_1, d_2-\lambda_2, d_3,d_4, ....$. We define $N:=U(\V _+)/J$.
    Then $V=\mathrm{Ind}_{\theta}(N)$ is the classical Whittaker module (See \cite{OW} or \cite{MZ2}).
    From \cite{LGZ}
    we know that if $\lambda_1\neq 0$ or $\lambda_2\neq 0$, then
    $V$ is both an irreducible $\V$ module and a locally nilpotent $\V _+^{(2)}$ module.
    By Theorem 1 we know that $\Omega(\lambda,b)\otimes V$ is irreducible for any $\lambda\in
    \C^*$ and $b\in  \C\setminus\{1\} $. These modules will be studied in detail  in section 4.

\section {Isomorphisms}
\vskip 5pt In this section we will determine the necessary and
sufficient conditions for two irreducible tensor products   defined
in Theorem 1 to be isomorphic, which is the following

\begin{theorem} Let $\lambda, \lambda'  \in \C^*$, $b, b'\in
\C\setminus\{1\}$,
  and let $V$ and $V'$ be two irreducible modules over $\V$ such that each $d_k$ is
locally finite on both $V$ and $V'$ for all $k\ge R$ where $R$ is  a
fixed positive integer.
  Then $W=\Omega(\lambda, b)\otimes V$ and $W'=\Omega(\lambda', b')\otimes V'$ are isomorphic as $\V$
  modules if and only if
  $(\lambda, b)=(\lambda',b')$ and $V\cong V'$ as $\V$ modules.
\end{theorem}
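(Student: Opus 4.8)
The forward implication is immediate: if $(\lambda,b)=(\lambda',b')$ and $\alpha\colon V\to V'$ is a $\V$-isomorphism, then $\id\otimes\alpha$ is a $\V$-isomorphism $W\to W'$. So the plan is to prove the reverse implication. I fix a $\V$-isomorphism $\varphi\colon W\to W'$ and a nonzero $v\in V$, and write $\varphi(1\otimes v)=\sum_{j=0}^{s'}\partial^j\otimes v'_j\in W'$ with $v'_{s'}\neq 0$ (nonzero since $\varphi$ is injective).

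The central idea is to compare the two sides of $\varphi(d_l(1\otimes v))=d_l(\varphi(1\otimes v))$ for large $l$ and to read off $\lambda$ and $b$ from the asymptotics in $l$. For $l\geq K(v)$ the left-hand side equals $\lambda^l P(l)$, where $P(l)=\varphi(\partial\otimes v)+l(b-1)\varphi(1\otimes v)$ is a $W'$-valued polynomial of degree exactly $1$, its leading coefficient $(b-1)\varphi(1\otimes v)$ being nonzero because $b\neq 1$. For $l$ larger than all $K(v'_j)$ the right-hand side equals $(\lambda')^l Q(l)$, where $Q(l)=\sum_{j=0}^{s'}(\partial+l(b'-1))(\partial-l)^j\otimes v'_j$ has degree exactly $s'+1$, with leading coefficient $(b'-1)(-1)^{s'}(1\otimes v'_{s'})\neq 0$ (using $b'\neq 1$, exactly the leading-term computation from the proof of Theorem 1). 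Thus $\lambda^l P(l)=(\lambda')^l Q(l)$ for all large $l$. Applying a linear functional that is nonzero on a leading coefficient and comparing exponential-times-polynomial growth rates forces $\lambda=\lambda'$; otherwise both $P$ and $Q$ would have to vanish identically, contradicting $P\not\equiv 0$. Dividing by $\lambda^l=(\lambda')^l$ then yields the polynomial identity $P(l)=Q(l)$, whence $\deg P=\deg Q$, i.e. $s'=0$, so $\varphi(1\otimes v)=1\otimes v'_0\in 1\otimes V'$.

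I expect this growth comparison, which simultaneously pins down $\lambda=\lambda'$ and forces $s'=0$, to be the main obstacle; the rest is bookkeeping. From $s'=0$ and $P=Q$ I read off the remaining data: the coefficient of $l$ gives $(b-1)\varphi(1\otimes v)=(b'-1)(1\otimes v'_0)$, hence $b=b'$, and the constant term gives $\varphi(\partial\otimes v)=\partial\otimes v'_0$. Running this for every $v$ shows $\varphi(1\otimes V)\subseteq 1\otimes V'$, and the same argument applied to $\varphi^{-1}$ gives equality, so $\psi\colon V\to V'$ defined by $\varphi(1\otimes v)=1\otimes\psi(v)$ is a linear bijection satisfying $\varphi(\partial\otimes v)=\partial\otimes\psi(v)$. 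Finally I expand $\varphi(d_k(1\otimes v))=d_k\varphi(1\otimes v)$ for arbitrary $k$, now using $\lambda=\lambda'$ and $b=b'$ together with $\varphi(1\otimes u)=1\otimes\psi(u)$ and $\varphi(\partial\otimes v)=\partial\otimes\psi(v)$; all terms cancel except $1\otimes\psi(d_kv)=1\otimes d_k\psi(v)$, which proves $\psi(d_kv)=d_k\psi(v)$ for all $k$. Hence $\psi$ is a $\V$-isomorphism and $V\cong V'$, completing the proof.
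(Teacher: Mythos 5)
Your proof is correct and follows essentially the same route as the paper: both sides of $\varphi(d_l(1\otimes v))=d_l\varphi(1\otimes v)$ are compared for large $l$ to force $\lambda=\lambda'$, $b=b'$ and $\varphi(1\otimes v)\in 1\otimes V'$, after which the induced bijection $V\to V'$ is checked to intertwine the $\V$-action. The only difference is cosmetic: the paper applies the combination $\lambda^{-l}d_l-\lambda^{-l'}d_{l'}$, which acts on $1\otimes v$ by the scalar $(l-l')(b-1)$ and thereby sidesteps your appeal to the linear independence of the exponential polynomials $\mu^l l^j$, but your growth-rate argument is equally valid.
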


\begin{proof}
  The sufficiency of the Theorem is obvious. We need only to prove the necessity.
 Let $\phi$ be an isomorphism from $W$ to $W'$.

\vskip 5pt Take a nonzero element $1\otimes  v \in W$. Suppose
 $$\phi(1\otimes  v)=\sum_{j=0}^{n}\partial^j\otimes  w_j,$$
  where ${w_j}\in {V'}$ with $w_n\neq 0$. There is a positive integer $K=K(v) $ such that
  $d_l(v)=d_l({w_j})=0$ for all integers $l\geq K$ and $0\leq j\leq
  n$. For any  $l,l'\geq K,$ we have
$$(\lambda^{-l}d_l-\lambda^{-l'}d_{l'})(1\otimes
v)=(l-l')(b-1)(1\otimes  v).$$ Then $$\aligned
 &(l-l')(b-1)\sum_{j=0}^{n}\partial^j\otimes  w_j=(\lambda^{-l}d_l-\lambda^{-l'}d_{l'})\sum_{j=0}^{n}
 (\partial^j\otimes  {w_j})\\
=&\sum_{j=0}^{n}((\frac{\lambda'}{\lambda})^l(\partial +
l(b'-1))(\partial-l)^j
  -(\frac{\lambda'}{\lambda})^{l'}(\partial + l'(b'-1))(\partial-l')^j)\otimes
  {w_j}.
  \endaligned$$
   We deduce that
  $$((\frac{\lambda'}{\lambda})^l-(\frac{\lambda'}{\lambda})^{l'})(\partial^{n+1}\otimes w_n)=0, \,\,\,
  \forall \,\,\,l,l'\geq K.$$
   So
    $\lambda'=\lambda$. The previous equation becomes
$$
 \aligned  (l-l')&(b-1)\sum_{j=0}^{n}\partial^j\otimes  w_j \\
&=\sum_{j=0}^{n} (b'-1)(l(\partial-l)^j
  -l'(\partial-l')^j)\otimes
  {w_j}\\ &\hskip 1cm +\sum^n_{j=0}\partial ((\partial-l)^j-(\partial - l')^j)\otimes w_j,
\endaligned
$$
  where $l,l'\geq K$. If $n>0$, the coefficient of $l^{n+1}$ is $(b'-1)1\otimes w_n$
which is nonzero, yielding a contradiction. So  $n=0$, hence $
b'=b.$
  Thus there is a one to one and onto linear map $\tau: V\to V'$ such
that \begin{equation}\phi(1\otimes  v)= 1\otimes
\tau(v),\,\,\forall v\in V.\end{equation}

\vskip 5pt \vskip 5pt Since
$$\phi(d_l(1\otimes v))=d_l(\phi(1\otimes v)), \,\,\forall\,\,l\geq K,
$$
that is,
$$\lambda^l\phi(\partial\otimes v)+\lambda^ll(b-1)(1\otimes \tau (v))$$
$$ =\lambda^l(\partial \otimes \tau (v))+\lambda^ll(b-1)(1\otimes \tau (v)),$$
we see that $\phi (\partial \otimes v)= \partial \otimes \tau (v).$
Hence, $\phi(d_j(1)\otimes v) =d_j(1)\otimes \tau (v), j\in
\mathbb{Z}$. From $\phi(d_j(1\otimes v))=d_j(\phi (1\otimes v)),
j\in \mathbb{Z}$ we can deduce that $\phi (1\otimes d_j(v))=
1\otimes d_j(\tau (v))$. So $$\tau(d_j(v))=d_j(\tau (v)),\,\,
\forall\,\,j\in \mathbb{Z}, v\in V.$$ Clearly, $\phi (c(1\otimes
v))=c(\phi(1\otimes v))$ implies that $\tau (cv)=c\tau(v)$. Thus
$\tau : V\rightarrow V'$  is a $\V$ module isomorphism and $V\cong V'$. This
completes the proof.
\end{proof}

\section{ New Irreducible Modules $\Omega(\lambda,b)\otimes V$ }

\vskip 5pt In this section we will  compare the irreducible tensor
products in Theorem 1 with all other known non-weight irreducible
Virasoro modules in  \cite{LZ},  \cite{LLZ},  \cite{MZ2} and
\cite{MW}.

 For any $s\in\Z_+, l,m\in\Z$, as in \cite{LLZ}, we denote
   $$\omega_{l,m}^{(s)}=\sum^s_{i=0}\begin{pmatrix} s \\i \\ \end{pmatrix}
  (-1)^{s-i}d_{l-m-i}d_{m+i}\in U(\V).$$

\begin{proposition} Let $\lambda  \in \C^*$, $b\in
\C\setminus\{1\}$,
  and let $V$  be an infinite dimensional irreducible $\V$ module such that $V$ is
  an irreducible module over $\V$ such that each $d_k$ is
locally finite on $V$ for all $k\ge R$ for a fixed $R\in\N$.

 \begin{enumerate}[$(i).$]
\item  For any positive integer $n$, the action of $\V_+^{(n)}$ on
$\Omega(\lambda,b)\otimes V$ is not locally finite.
\item  For any integer $s>4$, there exists $v\in V, m, l\in\Z$ such
that in $\Omega(\lambda,b)\otimes V$ we have
$$\omega_{l,-m}^{(s)}(1\otimes v)\ne0.$$\end{enumerate}
\end{proposition}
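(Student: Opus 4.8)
The plan is to handle both parts by combining the explicit shift-operator description of the $\Omega(\lambda,b)$-action with the coproduct $d_k\mapsto d_k\otimes 1+1\otimes d_k$ on the tensor product. Writing $\sigma_n$ for the substitution $\partial\mapsto\partial-n$, one records first that $d_n$ acts on $\Omega(\lambda,b)$ as $\lambda^n(\partial+n(b-1))\sigma_n$; in particular $d_n$ raises the $\partial$-degree by exactly one, with leading coefficient $\lambda^n$. This single observation drives everything that follows.

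For part $(i)$ I would fix any $k>\max\{n,R\}$ and any nonzero $v\in V$. Since $d_k\otimes 1$ and $1\otimes d_k$ commute, $d_k^m(1\otimes v)=\sum_{p=0}^m\binom{m}{p}(d_k^p\cdot 1)\otimes(d_k^{m-p}v)$. The shift-operator description shows $d_k^p\cdot 1$ has $\partial$-degree exactly $p$ (leading coefficient $\lambda^{kp}$), so the term $p=m$ forces $d_k^m(1\otimes v)$ to have $\partial$-degree exactly $m$. Hence the vectors $d_k^m(1\otimes v)$, $m\ge0$, have strictly increasing $\partial$-degree and are linearly independent; as $d_k\in\V_+^{(n)}$, the cyclic $\V_+^{(n)}$-module generated by $1\otimes v$ is infinite-dimensional and the action is not locally finite. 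The point is that local finiteness of $d_k$ on $V$ confines the $V$-factor to a finite-dimensional space, isolating the degree growth in the $\Omega$-factor.

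For part $(ii)$ I would set $a_i=l+m-i$ and $b_i=-m+i$, so that $\omega^{(s)}_{l,-m}=\sum_{i=0}^s\binom{s}{i}(-1)^{s-i}d_{a_i}d_{b_i}$ with $a_i+b_i=l$ for every $i$. The shift computation gives $d_{a_i}d_{b_i}|_{\Omega(\lambda,b)}=\lambda^l(\partial+\alpha_i)(\partial+\beta_i)\sigma_l$ with $\alpha_i,\beta_i$ affine in $i$, so each $\partial$-coefficient of $(\partial+\alpha_i)(\partial+\beta_i)$ is a polynomial in $i$ of degree at most $2$; since $s>4>2$, the alternating binomial sum (an $s$-th finite difference) annihilates it, and therefore $\omega^{(s)}_{l,-m}$ acts as $0$ on $\Omega(\lambda,b)$ itself. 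Consequently the entire effect on $\Omega(\lambda,b)\otimes V$ comes from the cross terms of the coproduct: expanding $d_{a_i}d_{b_i}(1\otimes v)$ into its four pieces and summing, the ``both factors on $\Omega$'' piece vanishes by the previous step, and one gets $\omega^{(s)}_{l,-m}(1\otimes v)=\partial\otimes E_1+1\otimes E_0$ with $E_1=\sum_{i=0}^s\binom{s}{i}(-1)^{s-i}(\lambda^{b_i}d_{a_i}+\lambda^{a_i}d_{b_i})v\in V$. It then suffices to produce $l,m,v$ with $E_1\neq0$.

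To finish I would use Theorem 2 of \cite{MZ2}, which forces $V\cong V(\theta,h)$ or $V\cong\mathrm{Ind}_\theta(N)$, so there is a nonzero $v_0$ with $d_iv_0=0$ for all $i$ beyond some $p$. Taking $l$ large makes every $a_i>p$, killing the first sum and leaving $E_1=\lambda^{l+m}\sum_{i=0}^s\binom{s}{i}(-1)^{s-i}\lambda^{-i}d_{i-m}v_0$. In the induced case, choosing $m>s$ makes all $i-m$ negative, and the vectors $d_{i-m}\otimes n_0$ are distinct PBW basis elements, so $E_1\neq0$ at once. In the highest weight case I would pick $k_0\ge1$ with $d_{-k_0}v_0\neq0$ (such $k_0$ exists, else $V=\C v_0$) and set $m=k_0$; then the $i=0$ summand $(-1)^sd_{-k_0}v_0$ sits in the level-$k_0$ weight space while every other nonvanishing summand lies in a strictly lower level, and independence of distinct weight spaces gives $E_1\neq0$. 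I expect the nonvanishing of $E_1$ to be the main obstacle: one must rule out a conspiratorial cancellation in the $s$-th difference, which is exactly why the argument is arranged so that the surviving term is isolated in its own weight space (respectively, its own PBW basis vector). The other delicate point is the clean vanishing of $\omega^{(s)}_{l,-m}$ on $\Omega(\lambda,b)$, which is what compels the passage to cross terms and is where the degree count (and hence the hypothesis $s>4$) is used.
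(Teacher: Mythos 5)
Your proposal is correct, and its skeleton for part $(ii)$ coincides with the paper's: expand the coproduct, observe that $\omega^{(s)}_{l,-m}$ annihilates $\Omega(\lambda,b)$ because an $s$-th finite difference kills the degree-$\le 2$ dependence on $i$ of $d_{l+m-i}d_{-m+i}(1)$, and take $l$ large so that $d_{l+m-i}$ kills the chosen vector of $V$. (Part $(i)$ is exactly the paper's one-line argument, fleshed out via the $\partial$-degree of $d_k^m(1\otimes v)$.) Where you genuinely diverge is the final nonvanishing step. The paper fixes $m=s+2$, reduces to $\sum_{i}\binom{s}{i}(-1)^{s-i}d_{l+m-i}(1)\otimes d_{-m+i}v$, and concludes by quoting from \cite{FF} and \cite{MZ2} that the whole set $\{v,d_{-2}v,\dots,d_{-s-2}v\}$ is linearly independent and annihilated by $d_l$ for large $l$; the latter condition is needed because the paper must also discard the terms $1\otimes d_{l+m-i}d_{-m+i}v$. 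You instead project onto the $\partial\otimes V$ component, which renders those terms (and all of $E_0$) irrelevant, and you need only a single nonzero vector $d_{-k_0}v_0$, isolated either by the $d_0$-eigenspace grading in the highest weight case or by PBW in the induced case. Your route is more self-contained --- it avoids the Feigin--Fuchs input that every $d_{-k}v$, $2\le k\le s+2$, survives in the irreducible quotient --- at the modest cost of tracking all four coproduct terms explicitly; the paper's version is shorter but leans on the cited linear-independence fact. Both arguments are valid, and both in fact prove the statement for every $s\ge 3$, not just $s>4$.
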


\begin{proof} As we mentioned before, $V$ has to be
$V(\theta, h)$ for some $\theta, h\in\C$ or
$\mathrm{Ind}_{\theta}(N)$ defined in \cite{MZ2}. Clearly, the
module $\Omega(\lambda,b)\otimes V$ is not a weight module. Part
$(i)$ follows from considering   $d_{n+1}^k(1\otimes v)$ for any
nonzero $v\in V$ and any $k\in\N$.

\vskip 5pt $(ii)$.  Take $v$ to be the highest weight vector of $V$
if $V$ is a highest weight module, otherwise $v$ can be any nonzero
vector of $V$. From  \cite{FF} and \cite{MZ2} we know that the set
$S=\{v, d_{-2}v, d_{-3}v,,...,d_{-s-2}v \}$ is linearly independent
and $d_lS=0$ for $l>K$ where $K\in\N$ is an integer depending on $v$
and $s$. Then for any $l>K$ and $ m=s+2$, noting that
$\omega_{l,-m}^{(s)}(1)=0$ in $\Omega(\lambda, b)$ we deduce that
$$\omega_{l,-m}^{(s)}(1\otimes v)=\sum^s_{i=0}{s \choose i}
  (-1)^{s-i}d_{l+m-i}d_{-m+i}(1\otimes v)$$
$$=\sum^s_{i=0}{s \choose i}
  (-1)^{s-i}d_{l+m-i}(1)\otimes d_{-m+i}(v)$$
$$=\sum^s_{i=0}{s \choose i}
  (-1)^{s-i}\lambda^{l+m-i}(\partial + (l+m-i)(b-1))\otimes d_{-m+i}(v),$$
which is nonzero. \end{proof}

\begin{corollary} Let $\lambda  \in \C^*$, $b\in
\C\setminus\{1\}$,
  and let $V$  be an infinite dimensional irreducible $\V$ module such that $V$ is
  an irreducible module over $\V$ such that each $d_k$ is
locally finite on $V$ for all $k\ge R$ for a fixed $R\in\N$. %locally
%nilpotent
%  $\V_+^{(n)}$ module for some positive integer $n$.
  Then $\Omega(\lambda, b)\otimes V$ is not isomorphic to any irreducible module
defined in \cite{LZ},  \cite{LLZ}, or in \cite{MZ2}.
\end{corollary}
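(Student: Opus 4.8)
The plan is to leverage the two structural invariants isolated in Proposition 3, both of which are visibly preserved under any $\V$-module isomorphism, and to verify that each of the three families of known modules violates exactly one of them. Indeed, if $\phi\colon W\to M$ is an isomorphism of $\V$-modules, then $\phi$ intertwines the action of every element of $U(\V)$; in particular $\phi$ sends a vector on which a fixed $d_k$ acts locally finitely to a vector with the same property, and it carries the locus where a fixed $u\in U(\V)$ acts by $0$ onto the corresponding locus for $u$ on $M$. So it suffices to exhibit, for each reference, an invariant of this kind on which $W=\Omega(\lambda,b)\otimes V$ and the candidate $M$ disagree.

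First I would dispose of \cite{MZ2}. By Proposition 3$(i)$ — more precisely by its proof, which exhibits for every $n$ a vector $1\otimes v$ on which $d_{n+1}$ fails to be locally finite — the operator $d_k$ is \emph{not} locally finite on $W$ for arbitrarily large $k$. On the other hand, every irreducible module $M$ treated in \cite{MZ2}, be it a highest weight module $V(\theta,h)$ or an induced module $\mathrm{Ind}_\theta(N)$, satisfies the property that $d_k$ acts locally finitely for all large $k$; this is exactly the hypothesis of Theorem 2 of \cite{MZ2} that characterizes these modules (local nilpotency in the highest weight case, and the analogous statement for $\mathrm{Ind}_\theta(N)$ coming from $d_iN=0$ for $i>k$). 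Since local finiteness of a fixed $d_k$ is an isomorphism invariant, $W\not\cong M$.

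Next I would treat \cite{LZ} and \cite{LLZ} together, through the quadratic operators $\omega_{l,m}^{(s)}\in U(\V)$. The point is that on these modules $\omega_{l,m}^{(s)}$ acts as $0$ once $s$ is large enough, whereas Proposition 3$(ii)$ produces, for every $s>4$, indices $l,m$ with $\omega_{l,-m}^{(s)}(1\otimes v)\ne 0$ in $W$. For the modules $\Omega(\lambda',b')$ of \cite{LZ} this vanishing is a short finite-difference computation: writing $d_nf(\partial)=(\lambda')^{n}(\partial+n(b'-1))f(\partial-n)$, the inner substitution makes $f(\partial-l)$ drop out uniformly and one obtains
\[
\omega_{l,m}^{(s)}f=(\lambda')^{l}\,f(\partial-l)\sum_{i=0}^{s}\binom{s}{i}(-1)^{s-i}P(i),\qquad P(i)=(\partial+(l-m-i)(b'-1))(\partial-l+(m+i)b'),
\]
where $P(i)$ is at most quadratic in $i$, so the $s$-th finite difference annihilates it for all $s\ge 3$. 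For the modules of \cite{LLZ} the analogous statement $\omega_{l,m}^{(s)}=0$ for all $s$ beyond their (bounded, at most $4$) degree is part of their structural description. In either case, were $W\cong M$ via $\phi$, then for a suitable $s>4$ we would get $\omega_{l,-m}^{(s)}\phi(1\otimes v)=\phi(\omega_{l,-m}^{(s)}(1\otimes v))\ne 0$, contradicting $\omega_{l,-m}^{(s)}=0$ on $M$.

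I expect the genuinely delicate points here to be bookkeeping rather than conceptual, since all the real content sits in Proposition 3. The first is to state the local finiteness claim for the general induced module $\mathrm{Ind}_\theta(N)$ of \cite{MZ2} precisely enough to contradict Proposition 3$(i)$ — one must pin down for which $k$ the action of $d_k$ is locally finite and match that range against the $d_{n+1}$ used in the proof. The second is to cite the exact vanishing bound for the $\omega^{(s)}$ on the \cite{LLZ} family, so that the threshold $s>4$ in Proposition 3$(ii)$ is seen to be calibrated precisely to beat it; this is the step where one must be faithful to the conventions of \cite{LLZ} rather than to the ad hoc computation that settles \cite{LZ}.
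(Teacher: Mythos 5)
Your proposal is correct and follows essentially the same route as the paper: Proposition 3$(i)$ against the locally finite action of $\V_+^{(n)}$ on the modules of \cite{MZ2}, and Proposition 3$(ii)$ against the vanishing of the operators $\omega_{l,m}^{(s)}$ on the modules of \cite{LZ} and \cite{LLZ}. Two small imprecisions do not affect the logic: your finite-difference computation is written for the special case $\Omega(\lambda',b')$, whereas the general modules $A_{b'}$ of \cite{LZ} (action $d_nw=(t^n\partial+nb't^n)w$) satisfy the same degree-$\leq 2$ bound by the identical argument; and the vanishing threshold for $\mathcal{N}(M,\beta)$ in \cite{LLZ} is $s>2r+2$ with $r$ arbitrary, not ``at most $4$,'' which is harmless only because Proposition 3$(ii)$ gives nonvanishing for \emph{every} $s>4$, so one simply takes $s>2r+2$.
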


\begin{proof} Since there is  an $n\in\N$ such that the action of $\V_+^{(n)}$ on the modules
$\mathrm{Ind}(N)$ defined in \cite{MZ2} is  locally finite, from
Proposition 3(i) we see  that $\Omega(\lambda, b)\otimes V$ is not
isomorphic to any  module described in \cite{MZ2}.

\vskip  5pt  Now let us consider an irreducible Virasoro module
$A_{b'}$ defined in \cite{LZ} where  $b'\in \mathbb{C}$ and $A$ is
an irreducible module over the associative algebra $\C[t, t^{-1},
t\frac d{dt}]$. We may assume that $A$ is $\C[t\frac
d{dt}]$-torsion-free, otherwise $A_{b'}$ will be a weight Virasoro
module. The action on $A_{b'}$ is
   $$cw=0,\,\,\,\,d_nw=(t^n\partial+nbt^n)w, \forall n\in \mathbb{Z}, w\in A,$$
   where $\partial=t\frac d{dt}$ and the left hand side is associative algebra action.
  From the proof of Theorem 9 in \cite{LLZ}, we know that $$\omega_{l,m}^{(s)}(A_{b'})=0,\,\,\,\forall\,\,
  l,m\in\Z, s\ge 3.$$
From Proposition 3(ii) we see that
 $\Omega(\lambda, b)\otimes V$ is not isomorphic to the   modules $A_{b'}$.

\vskip 5pt Let us consider the modules $\mathcal{N}(M, \beta)$
described in [LLZ], where $M$ is an irreducible module over the Lie
algebra $\mathfrak{a}_r:={\V _+}/{\V ^{(r)}_{+}}, r\in \N$ such that
the action of $\bar{d_r}:=d_r+\V _+^{(r)}$ on $M$ is injective,
$\beta \in \mathbb{C}[t^{\pm}]\setminus\mathbb{C}$. We know that
$\mathcal{N}(M, \beta)=M \otimes \mathbb{C}[t^{\pm}]$, and the
action of $\V$ on $\mathcal{N}(M, \beta)$ is defined by
$$d_m\circ (v\otimes t^n) =(n+\sum _{i=0}^{r}(\frac{m^{i+1}}{(i+1)!}\bar{d_i})v)
\otimes t^{n+m}+v\otimes (\beta t^{m+n}), $$
$$c(v\otimes t^n)=0, m,n\in \mathbb{Z}.$$
Note that   if $r=0$ the modules $\mathcal{N}(M, \beta)$ with $\beta
\in \mathbb{C}[t^{\pm}]\setminus\mathbb{C}$ are some modules of the
form $A_{b'}$ (see the beginning of Section 6 in \cite{LLZ} and
Section 4.1 in \cite{LZ}).  From the computation in (6.7) of \cite{LLZ}
we see that
$$\omega_{l,m}^{(s)} (\mathcal{N}(M,\beta))=0, \forall\,\, l,m\in\Z,
s>2r+2.$$ From Proposition 3(ii) we see that
 $\Omega(\lambda, b)\otimes V$ is not isomorphic to the  modules $\mathcal{N}(M, \beta)$.
\end{proof}

\vskip 5pt Now we compare our modules $\Omega(\lambda, b)\otimes V$
in Theorem 1 with the modules
   $\mathrm{Ind}_{\theta, \lambda}(\mathbb{C_\mathbf{m}})$ defined in \cite{MW}. Let us first recall
   the definition for $\mathrm{Ind}_{\theta, \lambda}(\mathbb{C_\mathbf{m}})$ from  \cite{MW}.
\vskip 5pt Let $\lambda\in \mathbb{C}^*$, denote by
$\mathfrak{b}_\lambda$ the subalgebra of $\V$ generated by
$d_k-\lambda^{k-1}d_1, k\geq 2$. For a fixed 3-tuple
$\mathbf{m}=(m_2,m_3,m_4)\in \mathbb{C}^3$, we define the action of
$\mathfrak{b}_\lambda$ on $\mathbb{C}$ by
\begin{equation}
\begin{split}
&(d_k-\lambda^{k-1}d_1)\cdot 1=m_k, k=2,3,4;\\
& (d_k-\lambda^{k-1}d_1)\cdot
1=(k-3)m_4\lambda^{k-4}-(k-4)m_3\lambda^{k-3}, k>4.
\end{split}
\end{equation}
This gives a $\mathfrak{b}_\lambda$ module construction on
$\mathbb{C}$. We denote it by $\mathbb{C}_{\mathbf{m}}$. Note that
the second equation in (4.1) follows from the first. For a fixed
$\theta\in \mathbb{C}$, the module $\mathrm{Ind}_{\theta,
\lambda}(\mathbb{C_\mathbf{m}})$ is defined as follows
\begin{equation}
\mathrm{Ind}_{\theta,
\lambda}(\mathbb{C_\mathbf{m}}):=U(\V)\otimes_{U(\mathfrak{b}_\lambda)}\mathbb{C}_{\mathbf{m}}/
  (c-\theta)U(\V)\otimes_{U(\mathfrak{b}_\lambda)}\mathbb{C}_{\mathbf{m}}.
\end{equation}
From the Theorem 1 in \cite{MW} we know that the $\V$ module
$\mathrm{Ind}_{\theta, \lambda}(\mathbb{C_\mathbf{m}})$ is
irreducible if $(m_2,m_3,m_4)\in \mathbb{C}^3$ and $\lambda\in
\mathbb{C}\backslash\{0\}$ satisfy the following conditions
\begin{equation}
\lambda m_3\neq m_4, 2\lambda m_2\neq m_3, 3\lambda m_3\neq 2m_4,
\lambda^2m_2+m_4\neq 2\lambda m_3.
\end{equation}

\vskip 5pt Now we are ready to prove the following

\begin{theorem}
Let $\lambda \in \C^*$,   $\lambda_1, \lambda_2, \theta, b\in\C$,
  and let $V$ be the classical irreducible Whittaker module
  $U(\V)/I$ where $I$ is the left ideal of $U(\V)$ generated by $$c-\theta, d_1-\lambda_1, d_2-\lambda_2, d_3,d_4,
  ....$$
  \begin{enumerate}[$(i).$]
\item The module
 $\Omega(\lambda, b)\otimes V $ is isomorphic to $ \mathrm{Ind}_{\theta, \lambda}(\mathbb{C_\mathbf{m}}),$
  where
\begin{equation}
\begin{split}
%&\lambda \lambda_1-2\lambda_2\neq 0,
&m_2=\lambda_2-\lambda\lambda_1+\lambda^2(b-1),\\
& m_3=\lambda^{2}(-\lambda_1+2\lambda(b-1)),\\
&m_4=\lambda^{3}(-\lambda_1+3\lambda(b-1)).
\end{split}
\end{equation}
\item The module $\mathrm{Ind}_{\theta,
\lambda}(\mathbb{C_\mathbf{m}})$ is irreducible if and only if
$\lambda m_3\neq m_4$, and $3\lambda m_3\neq 2m_4$ or
$\lambda^2m_2+m_4\neq 2\lambda m_3.$
\end{enumerate}
\end{theorem}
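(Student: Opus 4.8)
For part (i), the plan is to realize the isomorphism through the universal property of the induced module. First I would compute the action of the generators $d_k-\lambda^{k-1}d_1$ of $\mathfrak{b}_\lambda$ on the vector $1\otimes w\in W:=\Omega(\lambda,b)\otimes V$, where $w=1+I$ is the Whittaker vector (so $d_1w=\lambda_1w$, $d_2w=\lambda_2w$, $d_kw=0$ for $k\ge3$, $cw=\theta w$). Using $d_k\cdot1=\lambda^k(\partial+k(b-1))$ in $\Omega(\lambda,b)$, a direct computation gives, for every $k\ge2$,
\[
(d_k-\lambda^{k-1}d_1)(1\otimes w)=\bigl(\lambda^k(k-1)(b-1)+\lambda_k-\lambda^{k-1}\lambda_1\bigr)(1\otimes w),
\]
the $\partial\otimes w$–terms cancelling. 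The scalars on the right agree with (4.1) once $m_2,m_3,m_4$ are set as in (4.4); in particular one verifies that for $k>4$ they obey the recursion $(k-3)m_4\lambda^{k-4}-(k-4)m_3\lambda^{k-3}$. Hence $1\otimes w$ spans a copy of $\mathbb{C}_{\mathbf{m}}$, and since $c$ acts as $\theta$ on all of $W$, the universal property yields a $\V$–homomorphism $\phi:\mathrm{Ind}_{\theta,\lambda}(\mathbb{C}_{\mathbf{m}})\to W$ sending the canonical generator to $1\otimes w$.

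It remains to prove $\phi$ is bijective, and this is the technical heart. Surjectivity follows exactly as in the proof of Theorem 1 (no irreducibility needed): the image is a submodule containing $1\otimes w$, every vector of $V$ is annihilated by $d_l$ for $l$ sufficiently large, which forces $\Omega(\lambda,b)\otimes w\subseteq\operatorname{im}\phi$, and cyclicity of $V=U(\V)w$ then propagates this to $\Omega(\lambda,b)\otimes v$ for every $v$. For injectivity I would argue by PBW triangularity. Writing $\V=\mathfrak{b}_\lambda\oplus\C c\oplus\operatorname{span}\{d_i:i\le1\}$, the module $\mathrm{Ind}_{\theta,\lambda}(\mathbb{C}_{\mathbf{m}})$ has PBW basis $\{m\,u\}$ with $m$ an ordered monomial in the $d_i,\ i\le1$, while $W$ has basis $\{\partial^{j}\otimes d_{-N}^{n_{-N}}\!\cdots d_0^{n_0}w\}$. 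Ordering the $W$–basis first by the total degree $\sum_{i\le0}n_i$ of the $V$–factor and then by $\partial$–degree, and noting that $d_0\cdot\partial^{j}=\partial^{j+1}$ and that each $d_i$ either raises $\partial$–degree by one (keeping the $V$–factor) or multiplies the $V$–factor by $d_i$ (keeping the $\partial$–degree), one checks that $\phi(mu)$ has leading term $\lambda^{n_1}\partial^{n_1}\otimes(d_{-N}^{n_{-N}}\!\cdots d_0^{n_0}w)$ with nonzero coefficient. These leading terms are distinct for distinct $m$ and exhaust the $W$–basis, so $\phi$ is injective, hence an isomorphism.

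For part (ii) I would first invert (4.4). For fixed $\lambda\in\C^*$ the assignment $(b,\lambda_1,\lambda_2)\mapsto(m_2,m_3,m_4)$ is invertible (triangular in the variables $b-1,\lambda_1,\lambda_2$), and a short computation records the dictionary
\[
m_4-\lambda m_3=\lambda^4(b-1),\qquad 2m_4-3\lambda m_3=\lambda^3\lambda_1,\qquad 2\lambda m_3-m_4-\lambda^2m_2=-\lambda^2\lambda_2.
\]
Thus $\lambda m_3=m_4\iff b=1$, $\ 3\lambda m_3=2m_4\iff\lambda_1=0$, and $\lambda^2m_2+m_4=2\lambda m_3\iff\lambda_2=0$. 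By part (i), $\mathrm{Ind}_{\theta,\lambda}(\mathbb{C}_{\mathbf{m}})\cong\Omega(\lambda,b)\otimes V$ with these parameters, so it suffices to prove that $\Omega(\lambda,b)\otimes V$ is irreducible if and only if $b\ne1$ and $V$ is irreducible, together with the fact (Example 1 and \cite{LGZ}) that the Whittaker module $V$ is irreducible iff $\lambda_1\ne0$ or $\lambda_2\ne0$.

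I would prove the equivalence ``$\Omega(\lambda,b)\otimes V$ irreducible $\iff$ $b\ne1$ and $V$ irreducible'' in both directions. Sufficiency is Theorem 1 (for $b\ne1$ and $V$ irreducible the hypotheses hold with $R=3$). For necessity: if $b=1$, then $\Omega(\lambda,1)$ contains the codimension-one submodule $\Omega(\lambda,0)$, so $\Omega(\lambda,0)\otimes V$ is a proper nonzero $\V$–submodule of $\Omega(\lambda,1)\otimes V$; if instead $V$ is reducible with proper nonzero submodule $V'$, then $\Omega(\lambda,b)\otimes V'$ is a proper nonzero $\V$–submodule. Either way $\Omega(\lambda,b)\otimes V$ is reducible. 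Combining this equivalence with the dictionary and the Whittaker irreducibility criterion yields exactly the stated condition. The main obstacle is the injectivity in part (i); the remainder is the bookkeeping of (4.4) and the two explicit proper submodules.
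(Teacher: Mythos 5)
Your proposal is correct and follows essentially the same route as the paper: compute the action of the generators $d_k-\lambda^{k-1}d_1$ on $1\otimes w$ to match $(4.1)$ with the parameters $(4.4)$, invoke the universal property to get a surjection, prove injectivity by a leading-term/triangularity argument on the PBW bases of the two modules, and obtain (ii) by inverting $(4.4)$ and reducing to the criterion ``$b\neq 1$ and $V$ irreducible.'' The only differences are cosmetic: you order the basis of $W$ by total degree of the $V$-factor and then $\partial$-degree where the paper uses a lexicographic order (both give the same leading term $\lambda^{k_1}\partial^{k_1}\otimes d_{-n}^{k_{-n}}\cdots d_0^{k_0}w$), and you make explicit the two proper submodules needed for the necessity direction of the irreducibility criterion, which the paper leaves as ``simple computations.''
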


\begin{proof} Let us denote $W=\Omega(\lambda, b)\otimes V$ and $v=1+I$ in $V$.
By simple computations and using Theorem 1 we have that
\begin{enumerate}[$(a).$]
\item $W$ is cyclic with generator $1\otimes v$;
\item $W$ is irreducible if and only if  $b\ne1$, and $\lambda_1\ne0$ or
$\lambda_2\ne0$.
\end{enumerate}

\

In $W$, for $k=2,3,4$ we compute that
$$(d_k-\lambda^{k-1}d_1)(1\otimes v)=(d_k-\lambda^{k-1}d_1)(1)\otimes v+ 1\otimes (d_k-\lambda^{k-1}d_1)(v)$$
$$=\lambda^k((\partial+k(b-1))-(\partial+(b-1)))\otimes v+1\otimes(\delta_{k,2}\lambda_2-\lambda^{k-1}
\lambda_1)(v)$$
$$=(\lambda^k(k-1)(b-1)-\lambda^{k-1}\lambda_1+\delta_{k,2}\lambda_2)(1\otimes v) $$$$=m_k(1\otimes v),$$
where $m_k, k=2,3,4$ are given by (4.4). It follows that
$$(d_k-\lambda^{k-1}d_1)(1\otimes v)=((k-3)m_4\lambda^{k-4}-(k-4)m_3\lambda^{k-3})(1\otimes v),
$$ for all $k>4.$ Because of the universal property of the module $\mathrm{Ind}_{\theta, \lambda}(\mathbb{C_\mathbf{m}})$
we have the following surjective (onto) homomorphism of
modules
$$\tau: \mathrm{Ind}_{\theta, \lambda}(\mathbb{C_\mathbf{m}}) \to W,$$
uniquely determined by $\tau(1)=1\otimes v$.

\vskip 5pt It is clear that $\mathrm{Ind}_{\theta,
\lambda}(\mathbb{C_\mathbf{m}})$ has a basis
$$\{d_{-n}^{k_{-n}}\cdots d_{-1}^{k_{-1}}d_0^{k_0}d_1^{k_1}\cdot 1: k_1,k_0, k_{-1},
 \cdots, k_{-n}\in \mathbb{Z}_+, n\in \mathbb{Z}_+\}.$$ Then $W$ is the linear span of the set
\begin{equation}
\{d_{-n}^{k_{-n}}\cdots d_{-1}^{k_{-1}}d_0^{k_0}d_1^{k_1}\cdot (1\otimes v):
k_1,k_0, k_{-1},
 \cdots, k_{-n}\in \mathbb{Z}_+, n\in \mathbb{Z}_+\}
 \end{equation} because $\tau $ is surjective. We know that $W$ has
 a basis
$$B=\{\partial^{k_1}\otimes \left(d_{-n}^{k_{-n}}\cdots d_{-1}^{k_{-1}}d_0^{k_0}v\right): k_1,k_0, k_{-1},
 \cdots, k_{-n}\in \mathbb{Z}_+, n\in \mathbb{Z}_+\}.$$
Let us define a total order on $B$ as follows
$$\partial^{k_1}\otimes \left(d_{-n}^{k_{-n}}\cdots
d_{-1}^{k_{-1}}d_0^{k_0}v\right)<\partial^{l_1}\otimes
\left(d_{-m}^{l_{-m}}\cdots d_{-1}^{l_{-1}}d_0^{l_0}v\right) $$ if
and only if $(k_0,k_{-1},...k_{-n},0,0,..,0,
k_{1})<(l_0,l_{-1},...l_{-m},0,0,..,0, l_{1})$ in the
lexicographical order where the first zeros are $m$ copies and the
second zeros are $n$ copies, i.e.,
$$\aligned (a_1,a_2,...,a_{m+n+2})<&(b_1,b_2,...,b_{m+n+2}) \\ &\Longleftrightarrow (\exists
r>0)(a_i=b_i\forall i<r)(a_r<b_r).\endaligned$$

When we expand the elements in (4.5) into linear combinations in
terms of  $B$: $$d_{-n}^{k_{-n}}\cdots
d_{-1}^{k_{-1}}d_0^{k_0}d_1^{k_1}\cdot (1\otimes v)\hskip 5cm $$
$$\hskip 3cm =\lambda^{k_1}\partial^{k_1}\otimes \left(d_{-n}^{k_{-n}}\cdots
d_{-1}^{k_{-1}}d_0^{k_0}v\right)+{\text{lower terms}},$$ the leading
terms are exactly the corresponding basis elements in $B$. Thus
(4.5) is a basis for $W$. Therefore, $\tau$ is an isomorphism, i.e.,
$ \mathrm{Ind}_{\theta, \lambda}(\mathbb{C_\mathbf{m}}) \cong W.$
This is (i).

\vskip 5pt Solving (4.4) for $ \lambda_1, \lambda_2$ and $b$ we
obtain that
\begin{equation}\begin{split}
 \lambda_1&=\lambda^{-3}(2m_4-3\lambda m_3),\\
  \lambda _2&=\lambda^{-2}(m_4-2\lambda m_3+\lambda^2m_2),\\
  b&=1+\lambda^{-4}(m_4-\lambda m_3).\end{split}\end{equation}
  From
$\mathrm{Ind}_{\theta, \lambda}(\mathbb{C_\mathbf{m}}) \cong W$ we
see  that $\mathrm{Ind}_{\theta, \lambda}(\mathbb{C_\mathbf{m}})$ is
irreducible if and only if $W$ is irreducible; if and only if
$b\ne1$, and $\lambda_1\ne0$ or $\lambda_2\ne0$; if and only if
$m_4-\lambda m_3\neq 0$, and $2m_4-3\lambda m_3\neq0$ or
$m_4-2\lambda m_3+\lambda^2m_2\neq0.$ This is (ii) and completes the
proof.
\end{proof}

\vskip 5pt Note that Theorem 4 actually gives the  necessary and
sufficient conditions for the module $\mathrm{Ind}_{\theta,
\lambda}(\mathbb{C_\mathbf{m}}) $ described in \cite{MW} to be
irreducible. From Theorems 3 and 4, we have

\begin{proposition}
Let $\lambda\in \C^*$ and $b\in  \C\setminus\{1\}$ and let $V$  be
an infinite dimensional     module such that each $d_k$ is locally
finite on $V$ for all $k\ge n$ for a fixed $n\in\N$, and $V$ is not
a classical irreducible Whittaker module. Then $\Omega(\lambda,
b)\otimes V$  is a new non-weight irreducible $\V$ module.
\end{proposition}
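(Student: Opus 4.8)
The plan is to check the three assertions of the statement—irreducibility, the non-weight property, and novelty—separately, with only the last requiring real work. Irreducibility of $\Omega(\lambda,b)\otimes V$ is immediate from Theorem 1, since $V$ is irreducible and each $d_k$ acts locally finitely for $k\ge n$. The non-weight property is equally quick: on $\Omega(\lambda,b)=\C[\partial]$ the operator $d_0$ acts as multiplication by $\partial$ and so admits no eigenvectors, whence $\Omega(\lambda,b)$, and a fortiori $\Omega(\lambda,b)\otimes V$, is not a weight module, exactly as observed in the proof of Proposition 3. It remains to show that $\Omega(\lambda,b)\otimes V$ is isomorphic to none of the previously known non-weight irreducible Virasoro modules, i.e. those of \cite{LZ}, \cite{LLZ}, \cite{MZ2} and \cite{MW}. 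The first three references are already handled by Corollary 4 under the present hypotheses on $V$, so the entire content reduces to excluding the modules $\mathrm{Ind}_{\theta',\lambda'}(\mathbb{C}_{\mathbf{m}})$ of \cite{MW}.

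For this I would argue by contradiction and exploit the rigidity of the tensor decomposition. Suppose $\Omega(\lambda,b)\otimes V\cong \mathrm{Ind}_{\theta',\lambda'}(\mathbb{C}_{\mathbf{m}})$ for some $\theta',\lambda'\in\C$ and $\mathbf{m}\in\C^3$. Since the left-hand side is irreducible, so is the right-hand side. I then invoke Theorem 5 in the reverse direction: feeding $\mathbf{m}$ and $\lambda'$ into the inversion formula (4.6) produces scalars $\lambda_1,\lambda_2,b'$, and irreducibility of $\mathrm{Ind}_{\theta',\lambda'}(\mathbb{C}_{\mathbf{m}})$ forces $m_4-\lambda'm_3\ne 0$, that is $b'\ne 1$, together with $\lambda_1\ne 0$ or $\lambda_2\ne 0$. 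Hence the classical Whittaker module $V'$ attached to $(\theta',\lambda_1,\lambda_2)$ is irreducible, and Theorem 5(i) gives $\mathrm{Ind}_{\theta',\lambda'}(\mathbb{C}_{\mathbf{m}})\cong \Omega(\lambda',b')\otimes V'$. Combining the two isomorphisms yields $\Omega(\lambda,b)\otimes V\cong \Omega(\lambda',b')\otimes V'$.

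Now I would apply the isomorphism criterion of Theorem 2. Its hypotheses are met: $V$ is irreducible with $d_k$ locally finite for $k\ge n$ by assumption, while $V'$, being a classical Whittaker module, is locally nilpotent on $\V_+^{(2)}$ by Example 1 and therefore has $d_k$ locally finite for all $k\ge 3$; taking $R=\max\{n,3\}$ works for both factors. Theorem 2 then forces $(\lambda,b)=(\lambda',b')$ and $V\cong V'$. But $V'$ is a classical irreducible Whittaker module, so $V$ would be one as well, contradicting the hypothesis. This excludes \cite{MW} and finishes the proof.

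The step I expect to be the main obstacle is the \cite{MW} reduction, and specifically the bookkeeping needed before Theorems 5(i) and 2 can legitimately be applied: one must extract from the mere irreducibility of $\mathrm{Ind}_{\theta',\lambda'}(\mathbb{C}_{\mathbf{m}})$ both the inequality $b'\ne 1$ and the irreducibility of the associated Whittaker module $V'$, and then confirm that the two tensor factors share a common $R$ beyond which all $d_k$ act locally finitely, so that the rigidity conclusion of Theorem 2 is actually available.
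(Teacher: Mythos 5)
Your proposal is correct and follows essentially the same route as the paper, which offers no written proof beyond the phrase ``From Theorems 3 and 4, we have'': the intended argument is exactly your combination of Corollary 4 (excluding the modules of \cite{LZ}, \cite{LLZ}, \cite{MZ2}), Theorem 5 (realizing every irreducible $\mathrm{Ind}_{\theta',\lambda'}(\mathbb{C}_{\mathbf{m}})$ as $\Omega(\lambda',b')\otimes V'$ with $V'$ an irreducible classical Whittaker module), and Theorem 2 (forcing $V\cong V'$, a contradiction). Your explicit verification of the hypotheses of Theorem 2 (the common bound $R=\max\{n,3\}$ and the irreducibility of $V'$ extracted from Theorem 5(ii)) supplies details the paper leaves implicit, but it is the same proof.
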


\section{Applications}

\vskip 5pt In this section we will generalize the construction of
the Virasoro modules $\mathrm{Ind}_{\theta,
\lambda}(\mathbb{C_\mathbf{m}}) $ described in \cite{MW}.

\vskip 5pt Let  $n\in \Z_+$ and $ s_n, s_{n+1}, ..., s_{2n},
\lambda, \theta \in\C$ with $\lambda\ne0$. Let
$$\mathfrak{b}_{\lambda, n}:=\span_{\C}\{d_k-\lambda^{k-n+1}d_{n-1}:
k\ge n\},$$ which  is a subalgebra of
 $\V$.  Denote ${\bf {s}}=(s_n, s_{n+1}, ..., s_{2n})\in \C^{n+1}$. We define the action of
 $\mathfrak{b}_{\lambda, n}$
 on  $\C$  by the following
 \begin{equation}
 \begin{split}
&(d_k-\lambda^{k-n+1}d_{n-1})\cdot 1= s_k, \forall\,\,k=n, n+1, ...,2n;\\
&(d_k-\lambda^{k-n+1}d_{n-1})\cdot
1\\&\ \ =-(k-2n)s_{2n-1}\lambda^{k-2n+1}+(k-2n+1)s_{2n}\lambda^{k-2n},  \forall\,\,  k>2n,
\end{split}
 \end{equation}where we have assigned that $s_{-1}=0.$
 We denote the corresponding $\mathfrak{b}_{\lambda, n}$ module by $\mathcal {B}^{(n)}_{{\bf {s}}}$.  Note that
the second equation in (5.1) follows from the first.  Define the
 induced $\V$ module from $\mathcal {B}^{(n)} _{{\bf {s}}}$ as following
 \begin{equation} \mathrm{Ind}_{\theta, \lambda}(\mathcal {B}^{(n)} _{{\bf {s}}}):=
    \left(\mathrm{Ind}_{\mathfrak{b}_{\lambda,n}}^{\V}\mathcal{B}^{(n)}_{\mathbf{s}}\right )
    /(c-\theta)\left(\mathrm{Ind}_{\mathfrak{b}_{\lambda,n}}^{\V}\mathcal{B}^{(n)}_{\mathbf{s}}\right ).\end{equation}

We will determine necessary and sufficient conditions for
$\mathrm{Ind}_{\theta, \lambda}(\mathcal {B}^{(n)} _{{\bf {s}}})$ to
be simple  in the next three theorems for different cases of  $n$.
It is interesting to remark that the three cases are totally
different. Our main technique used here is Feigin-Fuchs' Thereom in
\cite{FF}, or Theorem A in \cite{A} (which is a refined version of
Feigin-Fuk's Theorem).

\vskip 5pt We first consider the case $n=0$. Obviously, the action (5.1) of
$\mathfrak{b}_{\lambda, 0}$ on $\mathcal {B}^{(0)}_{{\bf {s}}}$, where $\mathbf{s}=s_0\in \C,$ is
equivalent to the following action
 %For convenience, we take the following equivalent $\mathfrak{b_{\lambda, 0}}$ module action
\begin{equation}
(d_k-\lambda^k d_{0})(1\otimes v)= k\lambda^ks_0,\  k\geq -1.
\end{equation} For convenience, we shall take (5.3) in the case $n=0$. Recall that the Verma module $\bar{V}(\theta,0)$
over the Virasoro algebra was defined in the introduction. Now we
have the following

\begin{theorem}
Let $ {\bf s}=s_0,\theta\in \C,\lambda\in \C^* $ and denote $$M(\theta,
0)=\bar{V}(\theta,0)/U(\V)(d_{-1}(1+I(\theta,0))).$$
\begin{enumerate}[$(i).$]
\item The module $\mathrm{Ind}_{\theta, \lambda}(\mathcal {B}^{(0)}
_{{\bf{s}}})$ is isomorphic to
$\Omega(\lambda, b)\otimes M(\theta, 0)$, where $b=s_0+1$.
\item The module $\mathrm{Ind}_{\theta, \lambda}(\mathcal {B}^{(0)}
_{{\bf{s}}})$ is irreducible if and only if $s_0\neq 0$ and
$\theta\ne 1 - 6\frac{(p-q)^2}{pq}$ for any coprime integers $p,q
\ge2$.
\end{enumerate}
\end{theorem}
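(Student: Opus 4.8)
The plan is to prove (i) by realizing the cyclic generator of $\mathrm{Ind}_{\theta,\lambda}(\mathcal B^{(0)}_{\mathbf s})$ inside $\Omega(\lambda,b)\otimes M(\theta,0)$ and arguing as in the proof of Theorem 4, and to prove (ii) from (i) together with Theorem 1 and the Feigin--Fuchs description of singular vectors of $\bar V(\theta,0)$. For (i), let $v_0=1+I(\theta,0)$ and let $\bar v_0$ be its image in $M(\theta,0)$; then $d_i\bar v_0=0$ for $i\ge 1$, $d_0\bar v_0=0$, and $d_{-1}\bar v_0=0$ (the last because we quotiented by $U(\V)(d_{-1}v_0)$), so $d_i\bar v_0=0$ for all $i\ge-1$. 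Using $d_k(1)=\lambda^k(\partial+k(b-1))$ and $d_0(1)=\partial$ in $\Omega(\lambda,b)$, a direct computation gives, for all $k\ge-1$,
$$(d_k-\lambda^kd_0)(1\otimes\bar v_0)=\lambda^k k(b-1)(1\otimes\bar v_0)=k\lambda^k s_0(1\otimes\bar v_0)$$
(as $b-1=s_0$), together with $c(1\otimes\bar v_0)=\theta(1\otimes\bar v_0)$. These are exactly the defining relations (5.3) of $\mathcal B^{(0)}_{\mathbf s}$, so by the universal property of the induced module there is a surjective homomorphism $\tau\colon\mathrm{Ind}_{\theta,\lambda}(\mathcal B^{(0)}_{\mathbf s})\to\Omega(\lambda,b)\otimes M(\theta,0)$ with $\tau(1)=1\otimes\bar v_0$.

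To see $\tau$ is injective I would compare bases. Since $\span_{\C}\{c,d_{-1},d_{-2},\dots\}$ is a complement of $\mathfrak b_{\lambda,0}$ in $\V$, the source has the PBW basis $\{d_{-m}^{k_{-m}}\cdots d_{-1}^{k_{-1}}\cdot 1\}$; and since $U(\V)(d_{-1}v_0)$ is spanned by the PBW monomials of $\bar V(\theta,0)$ containing $d_{-1}$, the quotient $M(\theta,0)$ has basis $\{d_{-m}^{k_{-m}}\cdots d_{-2}^{k_{-2}}\bar v_0\}$, so $\Omega(\lambda,b)\otimes M(\theta,0)$ has basis $B=\{\partial^{k_{-1}}\otimes(d_{-m}^{k_{-m}}\cdots d_{-2}^{k_{-2}}\bar v_0)\}$. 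Applying $\tau$ to a source monomial and expanding in $B$: the copies of $d_{-1}$ act only on the $\Omega$-factor (because $d_{-1}\bar v_0=0$), each raising the $\partial$-degree with leading coefficient $\lambda^{-1}$, while the copies of $d_{-i}$ with $i\ge2$ acting on the $M$-factor build up the monomial $d_{-m}^{k_{-m}}\cdots d_{-2}^{k_{-2}}\bar v_0$. Ordering $B$ lexicographically by the $M$-part exponents first and the $\partial$-degree last (the analogue of the order used in Theorem 4), the leading term of $\tau(d_{-m}^{k_{-m}}\cdots d_{-1}^{k_{-1}}\cdot 1)$ is the nonzero multiple $\lambda^{-k_{-1}}\partial^{k_{-1}}\otimes(d_{-m}^{k_{-m}}\cdots d_{-2}^{k_{-2}}\bar v_0)$ of the matching element of $B$. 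Thus $\tau$ carries a basis to a triangular, hence invertible, transform of $B$ and is injective, proving (i) with $b=s_0+1$.

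For (ii), by (i) it suffices to decide when $\Omega(\lambda,b)\otimes M(\theta,0)$ is irreducible. If $s_0=0$ then $b=1$ and $\Omega(\lambda,1)$ has a codimension-one submodule isomorphic to $\Omega(\lambda,0)$; tensoring with $M(\theta,0)$ produces a proper nonzero submodule, so the module is reducible. If $s_0\ne0$ then $b\ne1$, so $\Omega(\lambda,b)$ is irreducible, and since $M(\theta,0)$ is a highest weight module every $d_k$ with $k\ge1$ is locally nilpotent on it; hence Theorem 1 applies as soon as $M(\theta,0)$ is irreducible, while if $M(\theta,0)$ has a proper nonzero submodule $V'$ then $\Omega(\lambda,b)\otimes V'$ is a proper nonzero submodule. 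Therefore, for $s_0\ne0$, irreducibility of $\mathrm{Ind}_{\theta,\lambda}(\mathcal B^{(0)}_{\mathbf s})$ is equivalent to irreducibility of $M(\theta,0)$.

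It remains to characterize when $M(\theta,0)=\bar V(\theta,0)/U(\V)(d_{-1}v_0)$ is irreducible. Here $d_{-1}v_0$ is a singular vector of weight $(\theta,1)$, and since a nonzero homomorphism of Virasoro Verma modules is injective, $U(\V)(d_{-1}v_0)\cong\bar V(\theta,1)$; thus $M(\theta,0)$ is irreducible iff $\bar V(\theta,0)$ has no singular vector outside $\bar V(\theta,1)$. Invoking Feigin--Fuchs' Theorem (Theorem A in \cite{A}) with $h=0$ and writing $\theta=1-6(p-q)^2/(pq)$, the reducibility of $\bar V(\theta,0)$ beyond the level-one vector is governed by the solutions of $h_{r,s}(\theta)=0$, equivalently $pr-qs=\pm(p-q)$; for generic $\theta$ the only positive-integer solution is $(r,s)=(1,1)$, whose singular vector is $d_{-1}v_0$, so $M(\theta,0)$ is irreducible. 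The plan is then to show, from the Feigin--Fuchs embedding diagram for $h=0$, that a singular vector lying outside $\bar V(\theta,1)$ appears precisely when $\theta=c_{p,q}:=1-6(p-q)^2/(pq)$ for coprime integers $p,q\ge2$, sitting at level $(p-1)(q-1)\ge2$ and corresponding to $(r,s)=(q-1,p-1)$. The main obstacle is exactly this last step: verifying that the extra singular vector escapes $\bar V(\theta,1)$ when both $p,q\ge2$, whereas in the degenerate cases $p=1$ or $q=1$ all additional singular vectors of $\bar V(\theta,0)$ remain inside $\bar V(\theta,1)$, leaving $M(\theta,0)$ irreducible. Combining the two cases yields irreducibility iff $s_0\ne0$ and $\theta\ne c_{p,q}$ for all coprime $p,q\ge2$.
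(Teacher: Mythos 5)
Your overall route coincides with the paper's: part (i) is proved exactly as there (the computation $(d_k-\lambda^kd_0)(1\otimes\bar v_0)=k\lambda^k(b-1)(1\otimes\bar v_0)$ for $k\ge-1$, the universal property giving a surjection $\tau$ onto the cyclic module $\Omega(\lambda,b)\otimes M(\theta,0)$, and a leading-term triangularity argument modelled on Theorem 4 for injectivity), and part (ii) is reduced in the same way, via Theorem 1, to deciding when $M(\theta,0)$ is irreducible. Your part (i) is complete and correct, and your reduction in (ii) (including the observation that reducibility of $M(\theta,0)$ is equivalent to the existence of a singular vector of $\bar V(\theta,0)$ outside $U(\V)d_{-1}v_0\cong\bar V(\theta,1)$) is also sound.

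The gap is the final step of (ii), and you flag it yourself as an unexecuted ``plan'': you never verify that such an extra singular vector exists precisely when $\theta=1-6\frac{(p-q)^2}{pq}$ with $p,q\ge2$ coprime, nor that in the remaining degenerate root cases of the Kac determinant at $h=0$ (your $p=1$ or $q=1$) every additional singular vector of $\bar V(\theta,0)$ lies inside $\bar V(\theta,1)$. Locating the solutions $(r,s)=(1,1)$ and $(r,s)=(q-1,p-1)$ of $h_{r,s}(\theta)=0$ is not enough; one must know the embedding pattern among the corresponding Verma submodules, i.e.\ which case of the Feigin--Fuchs classification one is in, to conclude that the level-$(p-1)(q-1)$ singular vector is not contained in the level-one one. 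This is exactly the point the paper settles by citing Astashkevich's Theorem A (Conditions III$_-$ and III$_+$ of \cite{A}), which characterize when the maximal proper submodule of $\bar V(\theta,0)$ cannot be generated by a single singular vector. Without that citation, or an equivalent analysis of the embedding diagrams, your proof of (ii) is not finished.
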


\begin{proof} Let $b\in \C.$
Denote by $$v=1+U(\V)(d_{-1}(1+I(\theta,0)))\in M(\theta,0).$$ Let
$\langle 1\otimes v\rangle $ be the submodule of $\Omega(\lambda,
b)\otimes M(\theta, 0)$ generated by $1\otimes v$. By repeatedly
acting $d_1$ on $1\otimes v$ we see that $\Omega(\lambda, b)\otimes
v\subseteq \langle 1\otimes v\rangle $. Then we  see that
$\Omega(\lambda, b)\otimes M(\theta, 0)$ is a cyclic module with
generator $1\otimes v$. From theorem 1 we can deduce that
$\Omega(\lambda, b)\otimes M(\theta, 0)$ is irreducible if and only
if $b\neq 1$ and $M(\theta,0)$ is irreducible.

Now let us consider the irreducibility of $M(\theta,0)$. From
Theorem A in \cite{A} we know that $M(\theta,0)$ is not irreducible
if and only if the maximal proper submodule $I(\theta, 0)$ of the
verma module $\bar{V}(\theta,0)$ cannot be generated by only one
singular vector; if and only if Conditions III$_-$ and III$_+$  in
\cite{A} are satisfied, if and only if
$\theta=\frac{(3p+2q)(3q+2p)}{pq}\in \C$, where the parameters $p,
q\in\C^*$ such that the straight line $ l_{\theta,0}: pk+ql-p-q=0$
in the plane $\C^2(k,l)$  contains
  infinite integral points $(k,l)$ with $kl>0$;   if and only if
$\theta= 1 - 6\frac{(p-q)^2}{pq}$ for any integers $p,q$ with $p,q
\geq2$ and $\gcd(p,q)=1$. Thus $\Omega(\lambda, b)\otimes M(\theta,
0)$ is irreducible if and only if  $s_0\neq 0$ and $\theta\ne 1 -
6\frac{(p-q)^2}{pq}$ for any integers $p,q$ with $p,q  \ge2$ and
$\gcd(p,q)=1$.

By simple computation we can obtain that
\begin{equation}
(d_k-\lambda^k d_{0})(1\otimes v)= k\lambda^k (b-1)(1\otimes v), k\geq -1.
\end{equation} If we set $s_0=b-1$, then there exists a  $\V$ module
homomorphism and hence epimorphism $\rho: \mathrm{Ind}_{\theta,
\lambda}(\mathcal {B}^{(0)} _{{\bf{s}}})\rightarrow \Omega(\lambda,
b)\otimes M(\theta,0)$ uniquely determined by $\rho(1)=1\otimes v$.
The same arguments used in the proof of Theorem 4 can deduce that
$\rho$ is a monomorphism and hence an isomorphism. Thus
$\mathrm{Ind}_{\theta, \lambda}(\mathcal {B}^{(0)} _{{\bf{s}}})\cong
\Omega(\lambda, b)\otimes M(\theta,0)$ and (i) holds.

\vskip 5pt Therefore, $\mathrm{Ind}_{\theta, \lambda}(\mathcal
{B}^{(0)} _{{\bf{s}}})$ is irreducible if and only if
$\Omega(\lambda, b)\otimes M(\theta,0)$ is irreducible. By $s_0=b-1$
and the irreducible conditions for $\Omega(\lambda, b)\otimes
M(\theta,0)$ we can deduce (ii). This competes the proof.
%if and only if $b\neq 1$ and $V(\theta,0)$ is irreducible; and if and only if $s_0\neq 0$ and
\end{proof}

We now handle the case  $ \mathrm{Ind}_{\theta,
\lambda}(\mathcal {B}^{(1)} _{{\bf{s}}})$ for ${\bf s}=(s_1,s_2)$.

\begin{theorem}
Let $\lambda \in \C^*$,   $\theta\in\C$ and ${\bf s}=(s_1,s_2)\in \C^2$.
  \begin{enumerate}[$(i).$]
\item The module $ \mathrm{Ind}_{\theta, \lambda}(\mathcal {B}^{(1)}
_{{\bf{s}}})$
  is isomorphic to $\Omega(\lambda, b)\otimes \bar V(\theta, h) ,$
  where
\begin{equation}
\begin{split}
& b=1+\lambda^{-2}(s_2-\lambda s_1),\\
& h=\lambda^{-2}(s_2-2\lambda s_1)
\end{split}
\end{equation}
\item The module $\mathrm{Ind}_{\theta,
\lambda}(\mathcal {B}^{(1)} _{{\bf{s}}})$  is
irreducible if and only if $s_2-\lambda s_1\neq 0$ and
\begin{equation}\begin{split}(\frac{s_2-2\lambda
s_1}{\lambda^{2}}+&\phi(k)+\frac{kl-1}{2})
 (\frac{s_2-2\lambda s_1 }{\lambda^{2}}+\phi(l)+\frac{kl-1}{2})\\&+\frac{(k^2-l^2)^2}{16}\neq 0,
  \ \forall k,l\in \N,\end{split}\end{equation} where
 $\phi(j)=\frac{(j^2-1)(\theta-13)}{24},\ j\in \N.$
\end{enumerate}
\end{theorem}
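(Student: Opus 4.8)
The plan is to treat this as the direct analogue of Theorem 4 (and of Theorem 6 for $n=0$), with the Verma module $\bar V(\theta,h)$ in place of the Whittaker module. For (i), I would set $W=\Omega(\lambda,b)\otimes\bar V(\theta,h)$ with $b,h$ as in (5.5), let $v_h$ be the highest weight vector, and first record the action of the generators of $\mathfrak{b}_{\lambda,1}=\span_{\C}\{d_k-\lambda^k d_0:k\ge 1\}$ on $1\otimes v_h$. Using $d_k\cdot 1=\lambda^k(\partial+k(b-1))$ in $\Omega(\lambda,b)$, $d_0\cdot 1=\partial$, together with $d_k v_h=0$ for $k\ge 1$ and $d_0 v_h=h v_h$, a direct computation gives $(d_k-\lambda^k d_0)(1\otimes v_h)=\lambda^k(k(b-1)-h)(1\otimes v_h)$ for every $k\ge 1$. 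Matching $k=1,2$ with $s_1,s_2$ and solving yields exactly $b=1+\lambda^{-2}(s_2-\lambda s_1)$ and $h=\lambda^{-2}(s_2-2\lambda s_1)$; the relations for $k>2$ in (5.1) then hold automatically (as they must, being consequences of the first two). Since also $c(1\otimes v_h)=\theta(1\otimes v_h)$, the universal property of the induced module furnishes a homomorphism $\tau:\mathrm{Ind}_{\theta,\lambda}(\mathcal{B}^{(1)}_{\mathbf{s}})\to W$ with $\tau(1)=1\otimes v_h$.

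Next I would show $W$ is cyclic on $1\otimes v_h$, so that $\tau$ is onto. The set $X=\{w\in\bar V(\theta,h):\Omega(\lambda,b)\otimes w\subseteq\langle 1\otimes v_h\rangle\}$ contains $v_h$ (raise the $\partial$-degree by the Claim 2 computation in the proof of Theorem 1, which applies here since $d_l v_h=0$ for all $l\ge 1$), and $X$ is a $\V$-submodule of $\bar V(\theta,h)$ by the same argument; as $\bar V(\theta,h)$ is generated by $v_h$, we get $X=\bar V(\theta,h)$ and hence $W=\langle 1\otimes v_h\rangle$. Injectivity is then the leading-term argument of Theorem 4: the PBW basis $\{d_{-n}^{k_{-n}}\cdots d_{-1}^{k_{-1}}d_0^{k_0}\cdot 1\}$ of $\mathrm{Ind}_{\theta,\lambda}(\mathcal{B}^{(1)}_{\mathbf{s}})$ (here $d_1$ is absorbed, since $d_1=\lambda d_0+(d_1-\lambda d_0)\in\C d_0\oplus\mathfrak{b}_{\lambda,1}$, so no $d_1^{k_1}$ appears) maps under $\tau$ into the basis $\{\partial^{a}\otimes(d_{-1}^{b_{-1}}\cdots d_{-n}^{b_{-n}}v_h)\}$ of $W$ with, for a suitable lexicographic order prioritizing the negative-mode exponents over the $\partial$-degree, leading term a nonzero multiple of $\partial^{k_0}\otimes(d_{-1}^{k_{-1}}\cdots d_{-n}^{k_{-n}}v_h)$ (note $d_0^{k_0}$ produces $\partial^{k_0}$ precisely because $d_0 v_h=h v_h$ is scalar). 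This triangularity forces linear independence of the images, so $\tau$ is injective, hence an isomorphism, giving (i).

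For (ii), part (i) reduces the problem to deciding when $W=\Omega(\lambda,b)\otimes\bar V(\theta,h)$ is irreducible, and I claim this holds if and only if $b\ne 1$ and $\bar V(\theta,h)$ is irreducible. Sufficiency is Theorem 1, since each $d_k$ with $k\ge 1$ is locally nilpotent (hence locally finite) on the highest weight module $\bar V(\theta,h)$. For necessity: if $b=1$, the codimension-one submodule of $\Omega(\lambda,1)$ isomorphic to $\Omega(\lambda,0)$, tensored with $\bar V(\theta,h)$, is a proper nonzero $\V$-submodule of $W$; and if $\bar V(\theta,h)$ has a proper nonzero submodule $\bar V'$, then $\Omega(\lambda,b)\otimes\bar V'$ is a proper nonzero $\V$-submodule of $W$; in either case $W$ is reducible. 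Finally I translate the two conditions: $b\ne 1\iff s_2-\lambda s_1\ne 0$, and by Feigin--Fuchs' theorem (\cite{FF}, Theorem A of \cite{A}) the Verma module $\bar V(\theta,h)$ is reducible exactly when the level-$kl$ Kac factor vanishes for some $k,l\in\N$, i.e. when $(h+\phi(k)+\frac{kl-1}{2})(h+\phi(l)+\frac{kl-1}{2})+\frac{(k^2-l^2)^2}{16}=0$ with central charge $\theta$; substituting $h=\lambda^{-2}(s_2-2\lambda s_1)$ yields (5.7), proving (ii).

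I expect two points to carry the real weight. First, in (i), pinning down the exact total order on the PBW basis of $W$ so that $\tau$ is triangular: the competition is between $d_0$ raising the $\partial$-degree and the $d_{-j}$ raising both the $\partial$-degree and the Verma depth, and this must be resolved in favor of the Verma depth, mirroring the ordering in Theorem 4. Second, in (ii), the whole computation hinges on recognizing that the factored expression in (5.7) is precisely the Feigin--Fuchs reducibility factor of $\bar V(\theta,h)$ at level $kl$ with $c=\theta$; verifying that $\phi(j)=\frac{(j^2-1)(\theta-13)}{24}$ together with the cross term $\frac{(k^2-l^2)^2}{16}$ reproduces the product $(h-h_{k,l})(h-h_{l,k})$ of the two Kac roots is the step I would check most carefully.
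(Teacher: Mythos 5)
Your proposal is correct and follows essentially the same route as the paper: compute $(d_k-\lambda^k d_0)(1\otimes v_0)=\lambda^k(k(b-1)-h)(1\otimes v_0)$, solve for $b,h$, obtain the epimorphism from the universal property, prove injectivity by the leading-term/ordering argument of Theorem 4, and then reduce irreducibility to ``$b\ne 1$ and $\bar V(\theta,h)$ irreducible'' via Theorem 1 and the Kac determinant (Feigin--Fuchs) condition. The extra details you supply (absence of $d_1$ in the PBW basis since $d_1\in\C d_0\oplus\mathfrak{b}_{\lambda,1}$, and the explicit necessity argument for reducibility when $b=1$ or $\bar V(\theta,h)$ is reducible) are consistent with, and slightly more explicit than, what the paper writes.
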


\begin{proof} Denote
$W=\Omega(\lambda,b)\otimes\bar{V}(\theta, h),$ where $ b, h\in\C$.
From Theorem 1 and the well-known Kac' determinant formula (see, for
example \cite{K1}) we can deduce that
\begin{enumerate}[$(a)$.]
\item $W$ is a cyclic module with generator $1\otimes v_0$, where $v_0$ is a highest weight vector
of $\bar{V}(\theta, h)$;
\item $W$ is irreducible if and only if $b\neq 1$ and for all $k,l \in \N$,
\begin{equation}(h+\phi(k)+\frac{(kl-1)}{2})(h+\phi(l)+\frac{(kl-1)}{2})+\frac{(k^2-l^2)^2}{16}
\neq 0,\end{equation}where
 $\phi(j)=\frac{(j^2-1)(\theta-13)}{24}, j\in \N.$
\end{enumerate}

\vskip 5pt By simple computation we can obtain the following equalities
\begin{equation}(d_k-\lambda^{k}d_0)(1\otimes v_0)=\lambda^k(k(b-1)-h)(1\otimes v_0), k\in \N .
 \end{equation}Set
 \begin{equation}\begin{split}
 &s_1=\lambda(b-1-h),\\
 &s_2=\lambda^2(2(b-1)-h).
 \end{split}
 \end{equation}  Solving the system (5.9) of equations for $b$ and $h$ we can get
 $b=1+\lambda^{-2}(s_2-\lambda s_1), h=\lambda^{-2}(s_2-2\lambda s_1).$
Moreover, we also have
 $$(d_k-\lambda^{k}d_0)(1\otimes v_0)=(-(k-2)s_1\lambda^{k-1}+(k-1)s_2\lambda^{k-2})(1\otimes v_0),\ k>2.$$
 So there exists a $\V$ module homomorphism and hence epimorphism
 $\sigma: \mathrm{Ind}_{\theta, \lambda}(\mathcal{B}^{(1)}_{{\bf{s}}})\rightarrow W$ uniquely determined by
 $\sigma(1)=1\otimes v_0$. It is not difficult to show (using the same method in the proof of
 Theorem 4) that $\sigma$ is actually
 injective and bijective. Thus, $\mathrm{Ind}_{\theta, \lambda}(\mathcal{B}^{(1)}_{{\bf{s}}})\cong W$.
 This is (i).

\vskip 5pt Therefore, $\mathrm{Ind}_{\theta,
\lambda}(\mathcal{B}^{(1)} _{{\bf{s}}})$ is irreducible if and only
if $W$ is irreducible; if and only if $b\neq 1$ and (5.7) holds; if
and only if
 $s_2-\lambda s_1\neq 0$ and

 \begin{equation}\begin{split}(\frac{s_2-2\lambda s_1}{\lambda^{2}}+&\phi(k)+\frac{kl-1}{2})
 (\frac{s_2-2\lambda s_1 }{\lambda^{2}}+\phi(l)+\frac{kl-1}{2})\\&+\frac{(k^2-l^2)^2}{16}\neq 0,
  \ \forall k,l\in \N,\end{split}\end{equation}where
 $\phi(j)=\frac{(j^2-1)(\theta-13)}{24},\ j\in \N.$  This is (ii) and completes the proof.
\end{proof}

Before treating the case  $ \mathrm{Ind}_{\theta, \lambda}(\mathcal
{B}^{(n)} _{{\bf {s}}}), n>1$, let us first recall the Whittaker
modules $L_{\psi_n, \theta}$  defined in \cite{LGZ}.

\vskip 5pt %Let us first recall the definition of $L_{\psi_{n},\theta}.$
Let $n\in \N $ and  $(\lambda_{n}, \lambda_{n+1},\cdots,
\lambda_{2n})\in \C^{n+1}$. Define $\psi_{n}: \V
_+^{(n-1)}\rightarrow \C$ by the following \begin{equation}
\begin{split}&\psi_{n}(d_j)=\lambda_j, \ j=n,n+1, \cdots, 2n;\\
&\psi_n(d_j)=0, \ j>2n.\end{split}\end{equation} This can actually
define a $\V _+^{(n-1)}$ module action on $\C$ by $d_j\cdot
1=\psi_{n}(d_j),\ j\geq n$. Denote the $\V ^{(n-1)}_+$ module by
$\C_{\psi_n}.$ Then
\begin{equation} L_{\psi_n,\theta}=U(\V)\otimes_{U(\V _+^{(n-1)})}
\C_{\psi_n}/(c-\theta)U(\V)\otimes_{U(\V
_+^{(n-1)})}\C_{\psi_n}.\end{equation} From Theorem 7 in
\cite{LGZ} we know that $L_{\psi_n,\theta}$ is an irreducible $\V$
module if and only if $\lambda_{2n-1}\neq 0$ or $\lambda_{2n}\neq
0$. Moreover, it is easy to see that $L_{\psi_n,\theta}$ is a
locally nilpotent $\V _+^{(2n)}$ module.

\vskip 5pt For the modules $ \mathrm{Ind}_{\theta, \lambda}(\mathcal
{B}^{(n+1)} _{{\bf {s}}})$, where ${\bf s}=(s_{n+1}, s_{n+2},
\cdots, s_{2n+2})\in\C^{n+2}$ with $ n\ge 1$, we have the following

\begin{theorem}
Let $n\in\N$, ${\bf s}=(s_{n+1}, s_{n+2}, \cdots,
s_{2n+2})\in\C^{n+2}, \theta \in \C$ and $ \lambda\in\C^*$.
  \begin{enumerate}[$(i).$]
\item The module $ \mathrm{Ind}_{\theta, \lambda}(\mathcal {B}^{(n+1)} _{\bf {s}})$
  is isomorphic to $\Omega(\lambda, b)\otimes L_{\psi_n, \theta} ,$
  where
\begin{equation}
\begin{split}
& b=1+\lambda^{-2n-2}(s_{2n+2}-\lambda s_{2n+1}),\\
& \lambda_n=\lambda^{-n-2}((n+1)s_{2n+2}-(n+2)\lambda s_{2n+1}),\\
&\lambda_k=s_k-\lambda^{k-2n-2}(-(k-2n-2)\lambda
s_{2n+1}+(k-2n-1)s_{2n+2}), \\&\ \ \ \ n+1\leq k\leq 2n.
%& \lambda_k=0,\  k>2n.
\end{split}
\end{equation}
\item The module $\mathrm{Ind}_{\theta,
\lambda}(\mathcal {B}^{(n+1)} _{\bf {s}})$ is irreducible if and only if
\begin{equation}
s_{2n+2}-\lambda s_{2n+1}\neq 0,{\text{ and }}\end{equation}
\begin{equation}
s_{2n-1}\neq \lambda^{-3}(3\lambda s_{2n+1}-2s_{2n+2})\ {\text{ or
}} \, s_{2n}\neq \lambda^{-2} (2\lambda s_{2n+1}-s_{2n+2}),
\end{equation}
where we have assumed that  $s_1=0$ if $n=1$.
\end{enumerate}

\end{theorem}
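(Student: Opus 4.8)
The plan is to follow the template established in the proofs of Theorems 4, 6, and 7: realize $\mathrm{Ind}_{\theta,\lambda}(\mathcal{B}^{(n+1)}_{\mathbf{s}})$ as a tensor product $\Omega(\lambda,b)\otimes L_{\psi_n,\theta}$ via the universal property, verify the isomorphism by a leading-term (Vandermonde/triangularity) argument, and then transfer the irreducibility criterion for the tensor product into conditions on $\mathbf{s}$. First I would take $W=\Omega(\lambda,b)\otimes L_{\psi_n,\theta}$ and let $v_0=1\otimes\C_{\psi_n}$ be the canonical Whittaker generator of $L_{\psi_n,\theta}$. Using the action of $\Omega(\lambda,b)$ together with $d_k v_0=\lambda_k v_0$ for $n\le k\le 2n$ and $d_k v_0=0$ for $k>2n$, I would compute $(d_k-\lambda^{k-n}d_{n})(1\otimes v_0)$ for $k\ge n+1$ and match the result against the defining relations (5.1) of $\mathcal{B}^{(n+1)}_{\mathbf{s}}$. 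This forces the identities (5.14) relating $b,\lambda_n,\ldots,\lambda_{2n}$ to $\mathbf{s}$; I would then verify that the second (automatic) family of relations in (5.1) for $k>2n$ is indeed consistent with $d_k v_0=0$, which is the computation that pins down why the particular affine-in-$k$ form of (5.1) appears. By the universal property of the induced module this yields a surjection $\sigma\colon\mathrm{Ind}_{\theta,\lambda}(\mathcal{B}^{(n+1)}_{\mathbf{s}})\to W$ with $\sigma(1)=1\otimes v_0$.

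To see that $\sigma$ is injective I would use the same ordered-basis argument as in Theorem 4. A PBW basis of the induced module is obtained by applying ordered monomials in $d_j$ for $j<n+1$ (together with $d_{n-1}$ or the appropriate low-index generators, and $d_0,d_1,\ldots$) to $1$; applying $\sigma$ and expanding $W$ in its natural tensor basis $\partial^{k}\otimes(\text{ordered monomial})\,v_0$, the leading term with respect to a suitable lexicographic order is a nonzero scalar multiple of the corresponding basis vector, with all other contributions strictly lower. Triangularity then shows the images of basis vectors are linearly independent, so $\sigma$ is a bijection and hence an isomorphism, giving part (i).

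For part (ii), once (i) is in hand the module is irreducible if and only if $W=\Omega(\lambda,b)\otimes L_{\psi_n,\theta}$ is irreducible. By Theorem 1 (applicable since $L_{\psi_n,\theta}$ is a locally nilpotent, hence locally finite, $\V_+^{(2n)}$ module) $W$ is irreducible precisely when $b\neq 1$ and $L_{\psi_n,\theta}$ is irreducible. By the cited Theorem 7 of \cite{LGZ}, $L_{\psi_n,\theta}$ is irreducible if and only if $\lambda_{2n-1}\neq 0$ or $\lambda_{2n}\neq 0$. I would then substitute the inversions of (5.14)—expressing $b-1$, $\lambda_{2n-1}$, and $\lambda_{2n}$ in terms of $s_{2n-1},s_{2n},s_{2n+1},s_{2n+2}$—to translate $b\neq1$ into $s_{2n+2}-\lambda s_{2n+1}\neq 0$ (condition (5.15)) and $(\lambda_{2n-1},\lambda_{2n})\neq(0,0)$ into condition (5.16).

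The main obstacle I expect is bookkeeping rather than conceptual: getting the index shifts exactly right. The subalgebra $\mathfrak{b}_{\lambda,n+1}$ is built from $d_k-\lambda^{k-n}d_n$ for $k\ge n+1$, whereas the Whittaker datum $\psi_n$ lives on $\V_+^{(n-1)}$ with nonzero values on $d_n,\ldots,d_{2n}$, so I must carefully align the two index ranges and confirm that the affine relation in (5.1) for $k>2n+2$ corresponds exactly to $\psi_n(d_k)=0$ for $k>2n$. The special degenerate instruction ``$s_1=0$ if $n=1$'' signals that the formula $\lambda_k=s_k-\cdots$ in (5.14) must be interpreted with the convention $s_{-1}=0$ from (5.1); I would treat the boundary index $k=n+1$ (where $\lambda_k$ can involve $s_{n-1}$ or the assigned zero) separately to avoid an off-by-one error. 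Apart from this indexing care, the argument is a direct adaptation of the earlier proofs.
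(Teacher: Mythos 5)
Your proposal follows essentially the same route as the paper: compute $(d_k-\lambda^{k-n}d_n)(1\otimes v_0)$ on the Whittaker generator to pin down the dictionary (5.14) between $(b,\lambda_n,\dots,\lambda_{2n})$ and $\mathbf{s}$, obtain the surjection from the universal property, prove injectivity by the leading-term/ordered-basis argument of Theorem~4, and then transfer the irreducibility criterion ($b\neq 1$, and $\lambda_{2n-1}\neq0$ or $\lambda_{2n}\neq0$) through that dictionary. The indexing subtleties you flag (the shift from $\mathfrak{b}_{\lambda,n+1}$ to $\psi_n$, and the convention forcing $s_1=0$ when $n=1$) are exactly the points the paper's computation handles, so the proposal is correct and matches the published proof.
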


\begin{proof} Let $(\lambda_{n}, \lambda_{n+1},\cdots, \lambda_{2n})\in \C^{n+1}, \lambda_k=0, k>2n,$
and $L_{\psi_n,\theta}$ be defined by (5.11) and (5.12). Denote
$$v=1+(c-\theta)U(\V)\otimes_{U(\V _+^{(n-1)})}\C_{\psi_n}\in
L_{\psi_n,\theta}$$ and $W=\Omega(\lambda, b)\otimes L_{\psi_n,
\theta}$, where $b\in \C$. Then by Theorem 1 and the irreducible
conditions for the Whittaker module $L_{\psi_n,\theta}$ we can
deduce the following
\begin{enumerate}[$(a)$.]
\item $W$ is a cyclic module with generator $1\otimes v$;
\item $W$ is irreducible if and only if $b\neq 1$ and $\lambda_{2n-1}\neq 0$ or $\lambda_{2n}\neq 0.$
\end{enumerate}

\vskip 5pt For $k>n$ we compute
$$
\aligned
&\ \ \ \ (d_k-\lambda^{k-n}d_n)(1\otimes v) \\
&=(d_k-\lambda^{k-n}d_n)(1)\otimes v + 1\otimes (d_k-\lambda^{k-n}d_n)(v)\\
& =(\lambda^k(\partial+k(b-1))-\lambda^k(\partial+n(b-1)))\otimes v+1\otimes (\lambda_k-\lambda^{k-n}\lambda_n)v\\
&=(\lambda^{k}(k-n)(b-1)+(\lambda_k-\lambda^{k-n}\lambda_n))(1\otimes
v).
\endaligned
$$ Set
\begin{equation}s_k=\lambda^{k}(k-n)(b-1)+(\lambda_k-\lambda^{k-n}\lambda_n), \ n+1\leq k\leq 2n+2.\end{equation}
Then solving the system (5.16) of equations for $b,\lambda_{n},
\lambda_{n+1},\cdots, \lambda_{2n}$ we can get
\begin{equation}
\begin{split}
& b=1+\lambda^{-2n-2}(s_{2n+2}-\lambda s_{2n+1}),\\
& \lambda_n=\lambda^{-n-2}((n+1)s_{2n+2}-(n+2)\lambda s_{2n+1}),\\
&\lambda_k=s_k-\lambda^{k-2n-2}(-(k-2n-2)\lambda s_{2n+1}+(k-2n-1)s_{2n+2}), \\&\ \ \ \ n+1\leq k\leq 2n.
\end{split}
\end{equation}
By simple computation we can obtain
$$\aligned
& (d_k-\lambda^{k-n}d_{n})(1\otimes v)\\
& =(-(k-2n-2)s_{2n+1}\lambda^{k-2n-1}+(k-2n-1)s_{2n+2}\lambda^{k-2n-2})(1\otimes v),
\endaligned$$
where $k>2n+2$. Using similar arguments  in the proof of Theorem 4,
we deduce  that $\mathrm{Ind}_{\theta, \lambda}(\mathcal {B}^{(n+1)}
_{\bf {s}})\cong \Omega(\lambda, b) \otimes L_{\psi_n,\theta}$. This
is (i).

\vskip 5pt Therefore, $\mathrm{Ind}_{\theta, \lambda}(\mathcal
{B}^{(n+1)} _{\bf {s}})$
 is irreducible if and only if
$\Omega(\lambda, b)\otimes L_{\psi_n,\theta}$ is irreducible; if and only
if $b\neq 1$, and $\lambda_{2n-1}\neq0$ or $\lambda_{2n}\neq 0$; and if and only if (5.14)
 and (5.15) hold. This implies (ii) and completes the proof.
\end{proof}

\vskip 5pt Note that the modules $\mathrm{Ind}_{\theta,
\lambda}(\C_{\mathbf{m}})$ defined in \cite{MW} are just the
modules $ \mathrm{Ind}_{\theta, \lambda}(\mathcal {B}^{(2)} _{{\bf{s}}})$
we defined here for ${\bf s}=(m_2,m_3,m_4)$.\\

Now we will characterize the submodules of $\mathrm{Ind}_{\theta,
\lambda}(\mathcal {B}^{(n)} _{\bf {s}})$. Because of Theorems 7, 8,
and 9, it is enough to consider   submodules of  $\Omega(\lambda, b
)\otimes V$
 where  $V $ is determined by Theorems 7,8, 9, which is the
following

\begin{theorem}
Let $n\in \N$, $\lambda\in \C^*,b, \theta\in\C$, and let $V$ be a
highest weight module or  $ L_{\psi_n,\theta}$ over  $\V$.
\begin{enumerate}[$(i)$.]
\item If $b\neq 1,$ then each submodule $M $ of $\Omega(\lambda, b
)\otimes V$ is of the form $\Omega(\lambda, b )\otimes X$ for some
submodule
 $X$ of $V $.

\item If $b=1,$ then each submodule $M $ of $\Omega(\lambda, b
)\otimes V$ is of the form $\partial\Omega(\lambda,1)\otimes
X_1+\Omega (\lambda,1)\otimes X_2 $  where $X_1$ and $X_2$ are
submodules of $V $.
\end{enumerate}
\end{theorem}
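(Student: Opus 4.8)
The plan is to treat a given nonzero submodule $M$ of $\Omega(\lambda,b)\otimes V$ by extracting the relevant submodule(s) of $V$ as ``sections'' of $M$, proving the corresponding tensor product is contained in $M$, and then establishing the reverse inclusion by a degree-peeling induction on the $\partial$-degree. Throughout I would use that $V$ has the property (recorded in the proof of Theorem 1) that for each $v$ there is $K(v)\in\N$ with $d_l(v)=0$ for all $l\ge K(v)$, so that acting by $d_l$ for large $l$ interacts only with the $\Omega(\lambda,b)$-factor.

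For part $(i)$, set $X=\{v\in V:1\otimes v\in M\}$. First I would check $X$ is a $\V$-submodule: for $v\in X$, acting by $d_l$ with $l\ge K(v)$ on $1\otimes v$ yields $\lambda^l\partial\otimes v\in M$, and the induction of Claim 2 of Theorem 1 then puts all of $\Omega(\lambda,b)\otimes v$ into $M$; consequently, for arbitrary $j$ the identity $d_j(1\otimes v)=\lambda^j(\partial+j(b-1))\otimes v+1\otimes d_jv$ shows $1\otimes d_jv\in M$, so $d_jv\in X$, and $cv\in X$ similarly. This simultaneously gives $\Omega(\lambda,b)\otimes X\subseteq M$. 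The reverse inclusion is the core: take $w=\sum_{j=0}^s\partial^j\otimes v_j\in M$ with $v_s\ne0$, and for $l\ge K:=\max_j K(v_j)$ compute $\lambda^{-l}d_lw=\sum_{j=0}^s(\partial+l(b-1))(\partial-l)^j\otimes v_j$. Expanding as a polynomial in $l$ and invoking a Vandermonde argument over enough consecutive integers $l\ge K$ forces each $l$-coefficient into $M$; the coefficient of $l^{s+1}$ is $(b-1)(-1)^s\,1\otimes v_s$, which is a nonzero multiple of $1\otimes v_s$ because $b\ne1$. Hence $v_s\in X$, so $\partial^s\otimes v_s\in M$, and subtracting it lowers the $\partial$-degree; induction on $s$ delivers $w\in\Omega(\lambda,b)\otimes X$.

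For part $(ii)$, with $b=1$ the argument above degenerates precisely because the $l^{s+1}$-coefficient vanishes, which reflects the fact that $\Omega(\lambda,1)$ contains the proper submodule $\partial\Omega(\lambda,1)$. Here I would set $X_2=\{v\in V:1\otimes v\in M\}$ and $X_1=\{v\in V:\partial\otimes v\in M\}$. One shows both are submodules and $X_2\subseteq X_1$ (since $1\otimes v\in M$ gives $\partial\otimes v=\lambda^{-l}d_l(1\otimes v)\in M$ for $l\ge K(v)$), and that $\partial\Omega(\lambda,1)\otimes X_1+\Omega(\lambda,1)\otimes X_2\subseteq M$: from $v\in X_1$ the leading-$\partial$-term induction (now $d_l(\partial^t\otimes v)=\lambda^l\partial(\partial-l)^t\otimes v$) yields $\partial^t\otimes v\in M$ for all $t\ge1$. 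For the reverse inclusion, the same expansion becomes $\lambda^{-l}d_lw=\sum_{j=0}^s\partial(\partial-l)^j\otimes v_j$, whose highest power of $l$ is only $l^s$, with coefficient $(-1)^s\partial\otimes v_s$; the Vandermonde step gives $\partial\otimes v_s\in M$, i.e.\ the weaker $v_s\in X_1$ rather than $v_s\in X_2$. Peeling off $\partial^s\otimes v_s$ and inducting downward shows every $v_j$ with $j\ge1$ lies in $X_1$, while the surviving degree-zero term $1\otimes v_0\in M$ gives $v_0\in X_2$; this is exactly the asserted decomposition.

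The main obstacle is the reverse inclusion in part $(ii)$: one must carefully record that the $b=1$ degeneracy lowers the top power of $l$ by one, so interior coefficients $v_j$ ($j\ge1$) qualify only for membership in $X_1$ whereas the constant term alone achieves $v_0\in X_2$. Secondary care is needed to confirm that $X_1$ and $X_2$ are genuine $\V$-submodules and that the bookkeeping of which $\partial^t\otimes v$ lie in $M$ is consistent—in particular that $v\in X_1$ already forces $\partial^t\otimes v\in M$ for every $t\ge1$, which is what makes $\partial\Omega(\lambda,1)\otimes X_1\subseteq M$ and closes the argument.
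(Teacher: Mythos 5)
Your proposal is correct and follows essentially the same route as the paper: the same $\lambda^{-l}d_l$ expansion in powers of $l$ with a Vandermonde argument, noting that the top coefficient is $(b-1)(-1)^s\,1\otimes v_s$ when $b\neq 1$ but drops to $(-1)^s\,\partial\otimes v_s$ when $b=1$, followed by degree-peeling. The only (cosmetic) difference is that you define $X$, $X_1$, $X_2$ explicitly as sections $\{v: 1\otimes v\in M\}$ and $\{v:\partial\otimes v\in M\}$, whereas the paper takes them to be maximal subspaces with the corresponding tensor product contained in $M$; your version has the small advantage of making the submodule verification and the $b=1$ bookkeeping fully explicit.
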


\begin{proof}
(i).  $b \neq 1$.

Then $\Omega(\lambda, b )$ is  an irreducible $\V$ module. Let $Y$
be a nonzero submodule of $\Omega(\lambda,b )\otimes V .$
%we will prove that $Y$ has the form $Y=\Omega(\lambda, b )\otimes X$, where $X$ is a submodule of $V .$

{\bf Claim 1.} {\it If $\sum^s_{j=0}\partial^j\otimes v_j\in Y,$
 where $v_j\in V $, then $\Omega(\lambda,b )\otimes v_j\subset Y$ for all $ j=0,1,\cdots,
 s.$}

 Using the same arguments in the proof of Theorem 1 we can deduce that
$\Omega(\lambda, b )\otimes v_s\subset Y$ and hence
$\Omega(\lambda,b )\otimes v_j\subset Y, j=0,1,\cdots, s,$ by
induction on $j$.

{\bf Claim 2.} $Y=\Omega(\lambda,b )\otimes X$, where $X$ is a
submodule of $V $.

Let $X$ be the maximal subspace of $V $ satisfying $\Omega(\lambda,b
)\otimes X\subset Y$. The maximality of $X$ forces that $X$ is a
submodule of $V $. Using Claim 1 we see that $\Omega(\lambda,b
)\otimes X=Y.$

Thus (i) follows.

\

(ii). Now consider the case  $b =1$.

Let $Z$ be a   submodule of $\Omega(\lambda,b )\otimes V .$ Take a
nonzero $w=\sum_{j=0}^{s}\partial^j\otimes
    v_j\in Z$ such that
      $v_j\in V$.

\vskip 5pt {\bf Claim 3.} $\Omega(\lambda,1)\otimes u_0\subseteq Z$
and $\partial\Omega(\lambda,1)\otimes u_i\subset Z$ for all $i\ge
1$.

\vskip 5pt We will prove this by induction on $s$. This is true for
$s=0$ by simple computations. Now suppose  $s>0$. Let
$K=\max\{K(v_j):j=0,1,\cdots, s\}$. Using $d_l(v_j)=0$ for all
$l\geq K$
      and $j=0,1,\cdots,s$, we deduce that
$$d_lw=\sum_{j=0}^{s}\lambda^l\partial(\partial-l)^j\otimes
    v_j\in Z, \,\,\forall l\ge K.$$
Then the coefficient of $l^{s}$ is $\lambda^l\partial\otimes v_s$
which has to be in $Z$. By simple computations we deduce that
$\partial\Omega(\lambda,1)\otimes u_s\subset Z$. Claim 3 follows.

\vskip 5pt

Let $X_1, X_2$ be maximal subspaces of $Z$ such that $
\Omega(\lambda,1)\otimes X_1+\partial\Omega(\lambda,1)\otimes
X_2\subseteq Z$. It is easy to see that $X_1$ and $X_2$ are
submodules of $V$. Using Claim 3 it is not hard to deduce that
$Z=\partial\Omega(\lambda,1)\otimes X_1+\Omega(\lambda,1)\otimes
X_2.$ This is (ii) and completes the proof.
\end{proof}

Note that when $\theta=0$, the modules $\mathrm{Ind}_{\theta,
\lambda}(\mathcal {B}^{(0)} _{{\bf{s}}})$ are exactly the highest
weight-like modules defined in \cite{GLZ}. Now we can answer the
open problem in \cite{GLZ}: if ${\bf{s}}=s_0\in \C^*,$ whether or not $\mathrm{Ind}_{0,
\lambda}(\mathcal {B}^{(0)} _{{\bf{s}}})$ has a unique maximal
submodule. From (i) of Theorem 9 we know that the maximal submodules of
$\mathrm{Ind}_{\theta,
\lambda}(\mathcal {B}^{(0)} _{{\bf{s}}})$ correspond to the maximal submodules of $M(\theta,0)$.
Since $M(\theta,0)$ is a highest weight module, it has a unique maximal submodule, so does $\mathrm{Ind}_{\theta,
\lambda}(\mathcal {B}^{(0)} _{{\bf{s}}}).$ In the special case $\theta=0,$ the conclusion
is certainly true. Therefore,
we have the following
\begin{corollary} Let $\lambda, {\bf s}=s_0\in \C^*.$ Then
$\mathrm{Ind}_{0, \lambda}(\mathcal {B}^{(0)} _{{\bf{s}}})$ has a unique maximal submodule.
\end{corollary}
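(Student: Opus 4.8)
The plan is to transport the problem, via the isomorphism of Theorem 7(i), onto a tensor product to which Theorem 10(i) applies, and then reduce the question to a standard fact about highest weight modules. Since $\mathbf{s}=s_0\in\C^*$ we have $s_0\neq 0$, so the associated parameter $b=s_0+1$ satisfies $b\neq 1$. Theorem 7(i) (with $\theta=0$) then gives an isomorphism $\mathrm{Ind}_{0,\lambda}(\mathcal{B}^{(0)}_{\mathbf{s}})\cong\Omega(\lambda,b)\otimes M(0,0)$, and $M(0,0)=\bar V(0,0)/U(\V)(d_{-1}v_0)$ is by construction a quotient of the Verma module $\bar V(0,0)$, hence a cyclic highest weight module. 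Thus $b\neq 1$ together with the fact that $M(0,0)$ is a highest weight module places us exactly in the hypotheses of part (i) of Theorem 10.

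Next I would invoke Theorem 10(i): every submodule of $\Omega(\lambda,b)\otimes M(0,0)$ has the form $\Omega(\lambda,b)\otimes X$ for a submodule $X$ of $M(0,0)$, and each such $X$ conversely produces a submodule. The assignment $X\mapsto\Omega(\lambda,b)\otimes X$ is then an inclusion-preserving bijection from the lattice of submodules of $M(0,0)$ to that of $\Omega(\lambda,b)\otimes M(0,0)$: injectivity and order-preservation are immediate from the tensor decomposition, and surjectivity is precisely the content of Theorem 10(i). Any such lattice isomorphism carries maximal submodules to maximal submodules, so it suffices to prove that the highest weight module $M(0,0)$ has a unique maximal submodule.

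For the latter I would run the usual highest weight argument. Writing $v$ for the image in $M(0,0)$ of the highest weight vector $v_0=1+I(0,0)$, the module $M(0,0)$ is generated by $v$ and decomposes into weight spaces with the top weight space equal to $\C v$. Any proper submodule must intersect $\C v$ trivially, since otherwise it would contain $v$ and hence all of $M(0,0)$; therefore the sum of all proper submodules still meets $\C v$ only in $0$, is proper, and is the unique maximal submodule of $M(0,0)$. Transporting back through the lattice isomorphism, $\Omega(\lambda,b)\otimes M(0,0)$, and hence $\mathrm{Ind}_{0,\lambda}(\mathcal{B}^{(0)}_{\mathbf{s}})$, has a unique maximal submodule, as claimed.

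The one point requiring care — the main (though mild) obstacle — is confirming that the extra relation $d_{-1}v_0=0$ imposed in forming $M(0,0)$ from $\bar V(0,0)$ does not spoil the highest weight structure: one must check that the submodule $U(\V)(d_{-1}v_0)$ lies in the sum of the strictly lower weight spaces, so that cyclicity survives and the top weight space remains one-dimensional. This is clear since $d_{-1}v_0$ has weight strictly below that of $v_0$, after which the remaining verifications are routine.
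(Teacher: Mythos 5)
Your proof is correct and follows essentially the same route as the paper: identify $\mathrm{Ind}_{0,\lambda}(\mathcal{B}^{(0)}_{\mathbf{s}})$ with $\Omega(\lambda,s_0+1)\otimes M(0,0)$ via Theorem 7(i), use the submodule classification of Theorem 10(i) to transfer the question to $M(0,0)$, and conclude from the fact that a highest weight module has a unique maximal submodule. You merely spell out the lattice-isomorphism and weight-space details that the paper leaves implicit.
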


\noindent {\bf Acknowledgement.} The second author is partially
supported by NSF of China (Grant 10871192) and NSERC. The authors
want to thank Prof. R. Lu for a lot of helpful discussions when they
were preparing the paper, also to thank Prof. V. Mazorchuk for
scrutinizing the old version and giving a lot of nice suggestions.

\vspace{1mm}

\noindent H.T.: College of Mathematics and Information Science,
Hebei Normal (Teachers) University, Shijiazhuang, Hebei, 050016 P.
R. China, and Department of Applied Mathematics, Changchun
University of Science and Technology, Changchun, Jilin, 130022, P.R.
China. Email: tanhj999@yahoo.com.cn

\vspace{0.2cm} \noindent K.Z.: Department of Mathematics, Wilfrid
Laurier University, Waterloo, ON, Canada N2L 3C5, and College of
Mathematics and Information Science, Hebei Normal (Teachers)
University, Shijiazhuang, Hebei, 050016 P. R. China. Email:
kzhao@wlu.ca

\end{document}